\newcommand{\dueto}[1]{\textup{\textbf{(#1) }}}
\newcommand{\tmem}[1]{{\em #1\/}}
\newcommand{\tmmathbf}[1]{\ensuremath{\boldsymbol{#1}}}
\newcommand{\tmop}[1]{\ensuremath{\operatorname{#1}}}
\newcommand{\tmtextit}[1]{{\itshape{#1}}}
\newcommand{\udots}{{\mathinner{\mskip1mu\raise1pt\vbox{\kern7pt\hbox{.}}\mskip2mu\raise4pt\hbox{.}\mskip2mu\raise7pt\hbox{.}\mskip1mu}}}
\newtheorem{lemma}{Lemma}
\newtheorem{corollary}{Corollary}
\newtheorem{proposition}{Proposition}
\newtheorem{theorem}{Theorem}
\begin{document}

\title{Factorial Schur Functions and the Yang-Baxter Equation}\author{Daniel
Bump, Peter J.\ McNamara and Maki Nakasuji}\maketitle

\bigbreak\noindent
{\footnotesize \textbf{Abstract}. 
Factorial Schur functions are generalizations of Schur functions
that have, in addition to the usual variables, a second family
of ``shift'' parameters. We show that a factorial Schur function
times a deformation of the Weyl denominator may be expressed as
the partition function of a particular statistical-mechanical
system (six-vertex model).  The proof is based on the
Yang-Baxter equation. There is a deformation parameter $t$ which
may be specialized in different ways. If $t=-1$, then we recover
the expression of the factorial Schur function as a ratio of
alternating polynomials. If $t=0$, we recover the description as
a sum over tableaux. If $t=\infty$ we recover a description of
Lascoux that was previously considered by the second author. We also are
able to prove using the Yang-Baxter equation the asymptotic
symmetry of the factorial Schur functions in the shift
parameters. Finally, we give a proof using our methods of the
dual Cauchy identity for factorial Schur functions.  Thus using
our methods we are able to give thematic proofs of many of the
properties of factorial Schur functions.}

\bigbreak\noindent
\textit{Dedicated to Professor Fumihiro Sato}

\section{Introduction}
{\tmem{Factorial Schur functions}} are generalizations of ordinary Schur
functions $s_\lambda(z)=s_{\lambda} (z_1, \cdots, z_n)$ for which a surprising
amount of the classical theory remains valid. In addition to the usual spectral
parameters $z=(z_1,\cdots,z_n)$ and the partition $\lambda$ they involve a set $\alpha =
(\alpha_1, \alpha_2, \alpha_3, \cdots)$ of shifts that can be arbitrary
complex numbers (or formal variables), and are denoted $s_\lambda(z|\alpha)$. In
the original paper of Biedenharn and Louck~{\cite{BiedenharnLouck}}, only
the special case where $\alpha_n = 1 - n$ was considered. Their
motivation, inspired by questions from mathematical physics, was to the
decomposition of tensor products of representations with using particular
bases. It turns out that factorial Schur functions are the same as double
Schubert polynomials for Grassmannian permutations, and in this form they
appeared even earlier in Lascoux and Sch\"utzenberger~\cite{LascouxSchutz},
whose motivation (from algebraic geometry) was completely different.

Other early foundational papers are Chen and Louck~{\cite{ChenLouck}}, who
gave new foundations based on divided difference operators, and Goulden and
Hamel~{\cite{GouldenHamel}} where the analogy between Schur functions and
factorial Schur functions was further developed. In particular they gave a
Jacobi-Trudi identity. See Louck~{\cite{Louck}} for further historical
remarks.

Biedenharn and Louck (in the special case $\alpha_n=1-n$) defined $s_{\lambda}
(z| \alpha)$ to be a sum over Gelfand-Tsetlin patterns, and this definition
extends to the general case. Translated into the equivalent language of
tableaux, their definition is equivalent to (\ref{tableaudef}) below. It was
noticed independently by Macdonald~{\cite{MacdonaldVariations}} and by Goulden
and Greene~{\cite{GouldenGreen}} that one could generalize the factorial Schur
functions of Biedenharn and Louck by making use of an arbitrary set $\alpha$
of shifts. Macdonald observed an alternative definition of the factorial Schur
functions as a ratio of two alternating polynomials, generalizing the Weyl
character formula. This definition is (\ref{fsfdef}) below.

Both Macdonald and Goulden and Greene also noticed a relationship with what are
called {\tmem{supersymmetric Schur functions}}. These are symmetric functions in
two sets of variables, $z = (z_1, z_2, \cdots)$ and $w = (w_1, w_2, \cdots)$.
They are defined in terms of the ordinary Schur functions by
\[ s_{\lambda} (z\|w) = \sum_{\mu, \nu} c^{\lambda}_{\mu \nu} \, s_{\mu} (z)
   s_{\nu'} (w), \]
where $c_{\mu \nu}^{\lambda}$ is the Littlewood-Richardson coefficient, 
$\mu$ and $\nu$ run through partitions and
$\nu'$ is the conjugate partition. The relationship between the factorial
Schur functions and the supersymmetric Schur functions is this: although the
$s_{\lambda} (z| \alpha)$ are symmetric in the $z_i$ they are not symmetric in
the $\alpha_i$. Nevertheless, as the number $n$ of the parameters
$z_i$ tends to infinity, they become symmetric in the $\alpha_i$ in a certain
precise sense, and in the limit, they stabilize. Thus in a suitable sense
\begin{equation}
  \label{sslimit} \lim_{n \longrightarrow \infty} s_{\lambda} (z| \alpha) =
  s_{\lambda} (z\| \alpha) .
\end{equation}

Another important variant of the factorial Schur functions are the
{\tmem{shifted Schur functions}} that were proposed by Olshanskii, and
developed by Okounkov and Olshanskii~{\cite{OO}},~{\cite{OOII}}. Denoted
$s_{\lambda}^{\ast} (x_1, \cdots, x_n)$, they are essentially the same as the
factorial Schur functions of Biedenharn and Louck, but incorporate shifts in
the parameters so that they are no longer symmetric in the usual sense, but at
least satisfy the stability property $s_{\lambda}^{\ast} (x_1, \cdots, x_n) =
s_{\lambda}^{\ast} (x_1, \cdots, x_n, 0)$. These were applied to the
representation theory of the infinite symmetric group.

Molev and Sagan~{\cite{MolevSagan}} give various useful results for factorial
Schur functions, including a Littlewood-Richardson rule. A further
Littlewood-Richardson rule was found by Kreiman~{\cite{Kreiman}}. Knutson and
Tao~{\cite{KnutsonTao}} show that factorial Schur functions correspond to
Schubert classes in the equivariant cohomology of Grassmanians. See also
Mihalcea~\cite{Mihalcea} and Ikeda and Naruse~\cite{IkedaNaruse}.

Tokuyama~{\cite{Tokuyama}} gave a formula for Schur functions that depends on
a parameter $t$. This formula may be regarded as a deformation of the Weyl
character formula. It was shown by Hamel and King~{\cite{HamelKing}} that
Tokuyama's formula could be generalized and reformulated as the evaluation of
the partition function for a statistical system based on the six-vertex model
in the free-fermionic regime. Brubaker, Bump and Friedberg~{\cite{hkice}} gave
further generalizations of the results of Hamel and King, with new proofs
based on the Yang-Baxter equation. More specifically, they used the fact
that the six-vertex model in the free-fermionic regime satisfies a
parametrized Yang-Baxter equation with nonabelian parameter group $\tmop{GL}
(2) \times \tmop{GL} (1)$ to give statistical-mechanical systems whose
partition functions were Schur functions times a deformation of the Weyl
denominator. This result generalizes the results of Tokuyama and of Hamel and
King.

Our main new result (Theorem~\ref{icerepn}) is a Tokuyama-like formula for
factorial Schur functions. This is a simultaneous generalization
of~{\cite{hkice}} and of the representations of Lascoux~\cite{LascouxSchubert}
and of McNamara~\cite{McNamara}. As in~{\cite{hkice}} we will consider
partition functions of statistical-mechanical systems in the free-fermionic
regime. A significant difference between this paper and that was that
in~{\cite{hkice}} the Boltzmann weights were constant along the rows,
depending mainly on the choice of a parameter $z_i$. Now we will
consider systems in which we assign a parameter $z_i$ to each row,
but also a shift parameter $\alpha_j$ to each column. Furthermore, we will
make use of a deformation parameter $t$ that applies to the entire system.
We will show that the partition function may be expressed as the product of a
factor (depending on $t$) that may be recognized as a deformation of the Weyl
denominator, times the factorial Schur function $s_{\lambda} (z| \alpha)$.

The proof of Theorem~\ref{icerepn} depends on the Yang-Baxter equation.
We feel that it is significant that the Yang-Baxter equation can be
made a central tool in the theory of factorial Schur functions. The
results in the paper after Theorem~\ref{icerepn} are mainly already
known, but we will reprove them using our methods---either deducing
them from Theorem~\ref{icerepn} or giving proofs using the same tool
(free-fermionic Yang-Baxter equation).

By specializing $t$ in the formula of Theorem~\ref{icerepn}, we will obtain
different formulas for the factorial Schur functions. Taking $t = - 1$, we
obtain the representation as a ratio of alternating polynomials, which was
Macdonald's generalization of the Weyl character formula. This is the
formula we take as the definition of the factorial Schur functions, though
other definitions are possible.

There are two specializations $t$ in which the Weyl denominator in
Theorem~\ref{icerepn} reduces to a monomial. Taking $t = 0$, we obtain the
tableau definition of the factorial Schur functions. When $t = \infty$ we
recover another representation of the factorial Schur functions. Indeed
Lascoux~{\cite{LascouxSchubert}} found six-vertex model representations of
Grassmannian Schubert polynomials. A proof of this representation based on the
Yang-Baxter equation was subsequently found by McNamara~{\cite{McNamara}}. It
is this representation that we obtain when $t=\infty$.

Although we do not prove the supersymmetric limit
(\ref{sslimit}), we will at least prove the key fact that the $s_{\lambda} (z|
\alpha)$ are asymptotically symmetric in the $\alpha_j$ as the number $n$ of
parameters $z_i$ tends to infinity. We will obtain this by another
application of the Yang-Baxter equation.  We also give a proof of the dual
Cauchy identity for factorial Schur functions using our methods.

In addition to~\cite{LascouxSchubert} and~\cite{McNamara},
Zinn-Justin~\cite{ZJ}, \cite{ZJ1} gave another interpretation of factorial
Schur functions as transition matrices for a lattice model that may be
translated into a free-fermionic five-vertex model. It is unclear whether
Zinn-Justin's representation may also be obtained from Theorem~\ref{icerepn}
ours by specialization, but it is certainly very similar.

We would like to thank H.~Naruse for helpful comments on this paper and the
referee for careful reading. This work was supported in part by JSPS Research
Fellowship for Young Scientists and by NSF grants DMS-0652817 and DMS-1001079.

\section{Yang-Baxter equation}\label{ybesection}
We review the six-vertex model and a case of the Yang-Baxter equation
from~{\cite{hkice}}. We will consider a planar graph. Each vertex $v$ is
assumed to have exactly four edges adjacent to it. \textit{Interior edges}
adjoin two vertices, and around the boundary of the graph we allow
\textit{boundary edges} that adjoin only a single vertex. Every vertex has six
numbers $a_1 (v), a_2 (v), b_1 (v), b_2 (v), c_1 (v), c_2 (v)$ assigned to
it. These are called the {\tmem{Boltzmann weights}} at the vertex. By
a \textit{spin} we mean an element of the two-element set $\{+,-\}$.
In addition to the graph, the Boltzmann weights at each vertex,
we will also assign a spin to each boundary edge. Once we
have specified the graph, the Boltzmann weights at the vertices, and the
boundary spins, we have specified a statistical system~$\mathfrak{S}$.

A {\tmem{state}} $\mathfrak{s}$ of the system will be an assignment of spins
to the interior edges. Given a state of the system, every edge, boundary or
interior, has a spin assigned to it. Then every vertex will have a
definite configuration of spins on its four adjacent edges, and we
assume these to be in one of the two orientations listed in (\ref{boltzmannweights}).
Then let $\beta_{\mathfrak{s}}(v)$ equal
$a_1 (v), a_2 (v), b_1 (v), b_2 (v),
c_1 (v)$ or $c_2 (v)$ depending on the configuration of spins on the adjacent
edges. If $v$ does not appear in the table, the weight is zero.

\begin{equation}\label{boltzmannweights} \begin{array}{|c|c|c|c|c|c|}
     \hline
     \includegraphics{weighta1.mps} & \includegraphics{weighta2.mps} &
     \includegraphics{weightb1.mps} & \includegraphics{weightb2.mps} &
     \includegraphics{weightc1.mps} & \includegraphics{weightc2.mps}\\
     \hline
     \includegraphics{rota1.mps} & \includegraphics{rota2.mps} &
     \includegraphics{rotb1.mps} & \includegraphics{rotb2.mps} &
     \includegraphics{rotc1.mps} & \includegraphics{rotc2.mps}\\
     \hline
     a_1 (v) & a_2 (v) & b_1 (v) & b_2 (v) & c_1 (v) & c_2 (v)\\
     \hline
   \end{array} \end{equation}

The \textit{Boltzmann weight of the state $\beta (\mathfrak{s})$} is the product
$\prod_v \beta_{\mathfrak{s}} (v)$ of the Boltzmann weights at every vertex.
We only need to consider configurations in which the spins adjacent to each
vertex are in one of the configurations from the above table; if this is true,
the state is called {\tmem{admissible}}. A state that is not admissible has
Boltzmann weight zero.

The {\tmem{partition function $Z (\mathfrak{S})$}} is $\sum_{\mathfrak{s}}
\beta (\mathfrak{s})$, the sum of the Boltzmann weights of the states. We may
either include or exclude the inadmissible states from this sum, since they
have Boltzmann weight zero.

If at the vertex $v$ we have
\[ a_1 (v) a_2 (v) + b_1 (v) b_2 (v) - c_1 (v) c_2 (v) = 0, \]
the vertex is called {\tmem{free-fermionic}}. We will only consider systems
that are free-fermionic at every vertex.

Korepin, Boguliubov and Izergin~\cite{KBI} describe a nonabelian parametrized
Yang-Baxter equation for the free-fermionic six-vertex model with parameter
group $\Gamma=SL(2,\mathbb{C})$. Concretely this means that there is a
map $R:\Gamma\to\tmop{End}(V\otimes V)$, where $V$ is a two-dimensional
vector space, such that if $\gamma,\delta\in\Gamma$ then
\[R(\gamma)_{12}R(\gamma\delta)_{13}R(\delta)_{23}=
R(\delta)_{23}R(\gamma\delta)_{13}R(\gamma)_{12},\]
where if $R\in\tmop{End}(V\otimes V)$ then $R_{ij}$ means $R\times I_V$ acting
on $V\otimes V\otimes V$ with $R$ acting on the $i,j$ tensor components,
and the identity on the remaining component. Scalar matrices can obviously
be added to $\Gamma$ so their actual group is $SL(2,\mathbb{C})\times\mathbb{C}^\times$.
A statement with a slightly larger parameter group
$GL(2,\mathbb{C})\times\mathbb{C}^\times$ is in Brubaker, Bump and
Friedberg~\cite{hkice}. The nonzero components of $R(\gamma)$ if
written with respect to a standard basis of $V\otimes V$ will be
the Boltzmann weights of a free-fermionic vertex. This has the following
explicit reformulation:
\begin{proposition}
  \textbf{\cite[Theorem 3]{hkice}}
  \label{ybe}Let $v, w$ be vertices with free-fermionic Boltzmann weights.
  Define another type of vertex $u$ with
  \begin{eqnarray*}
    a_1 (u) & = & a_1 (v) a_2 (w) + b_2 (v) b_1 (w),\\
    a_2 (u) & = & b_1 (v) b_2 (w) + a_2 (v) a_1 (w),\\
    b_1 (u) & = & b_1 (v) a_2 (w) - a_2 (v) b_1 (w),\\
    b_2 (u) & = & - a_1 (v) b_2 (w) + b_2 (v) a_1 (w),\\
    c_1 (u) & = & c_1 (v) c_2 (w),\\
    c_2 (u) & = & c_2 (v) c_1 (w) .
  \end{eqnarray*}
  Then for any assignment of edge spins $\varepsilon_i \in \{\pm\}$ ($i = 1,
  2, 3, 4, 5, 6$) the following two configurations have the same partition
  function:
  \begin{equation}
    \includegraphics{ybl.mps} \qquad \includegraphics{ybr.mps}.
  \end{equation}
\end{proposition}

Note that by the definition of the partition function, the interior edge spins
(labeled $\nu, \gamma, \mu$ and $\delta, \psi, \phi$) are summed over, while
the boundary edge spins, labeled $\varepsilon_i$ are invariant. In order to
obtain this from Theorem~3 of~\cite{hkice} one replaces the R-matrix $\pi(R)$ 
in the notation of that paper by a constant multiple.

\section{Bijections}
In this section we will define some combinatorial bijections that we will
need later. One of the sets is the set of states of a statistical-mechanical
system, as in the last section, and we start by defining that.

We will make use of two special $n$-tuples of integers, namely
\[\rho = (n, \cdots,3, 2, 1),\qquad
\delta = (n-1,n-2,\cdots, 2,1,0).\]
Let $\lambda = (\lambda_1, \cdots, \lambda_n)$ be a partition, so $\lambda_1
\geqslant \ldots \geqslant \lambda_n \geqslant 0$. Let us consider a lattice
with $n$ rows and $n+\lambda_1$ columns. We will index the rows from $1$ to
$n$. We will index the columns from $1$ to $n + \lambda_1$, {\tmem{in reverse
order}}. We put the vertex $v_{\Gamma} (i, j, t)$ at the vertex in the $i$
row and $j$ column.

We impose the following boundary edge spins. On the left and bottom
boundaries, every edge is labeled $+$. On the right boundary, every edge is
labeled $-$. On the top, we label the edges indexed by elements of form
$\lambda_j + n - j + 1$ for $j = 1, 2, \cdots, n$ with a $-$ spin. 
These are the entries in $\lambda+\rho$. The remaining columns we label with
a~$+$ spin.

For example, suppose that $n = 3$ and that $\lambda = (5,4,1)$, so
$\lambda+\rho=(8,6,2)$. Since
$\lambda_j + n - j + 1$ has the values $8, 6$ and $2$, we put $-$ in
these columns. We label the vertex $v_{\Gamma} (i, j, t)$ in the $i$ row and
$j$ column by $i j$ as in the following diagram:
\begin{equation}
\label{iceexample}
\includegraphics{lattice.mps}
\end{equation}
Let this system be called $\mathfrak{S}^{\Gamma}_{\lambda, t}$.

We recall that a {\tmem{Gelfand-Tsetlin pattern}} is an array
\begin{equation}
  \label{lamrhopat} \mathfrak{T} = \left\{ \begin{array}{lllllll}
    p_{11} &  & p_{12} &  & \cdots &  & p_{1 n}\\
    & p_{22} &  &  &  & p_{2 n} & \\
    &  & \ddots &  & \udots &  & \\
    &  &  & p_{nn} &  &  & 
  \end{array} \right\}
\end{equation}
in which the rows are interleaving partitions. The pattern is {\tmem{strict}}
if each row is strongly dominant, meaning that $p_{i i} > p_{i, i + 1} >
\cdots > p_{i n}$.

A {\tmem{staircase}} is a semistandard Young tableau of shape
$(\lambda_1+n,\lambda_1+n-1,\ldots,\lambda_1)'$ filled with numbers from
$\{1,2,\ldots,\lambda_1+n\}$ with the additional condition that the diagonals are
weakly decreasing in the south-east direction when written in the French
notation.

An example of a staircase for $\lambda=(5,4,1)$ is

\begin{equation}
\label{stairexample}
\includegraphics{staircase.mps}
\end{equation}


%
%

\begin{proposition}\label{gtp}
Let $\lambda$ be a partition of length $\leqslant n$. There are natural bijections
between the following three sets of combinatorial objects
\begin{enumerate}
\item States of the six-vertex model $\mathfrak{S}^{\Gamma}_{\lambda, t}$,
\item Strict Gelfand-Tsetlin patterns with top row $\lambda+\rho$, and
\item Staircases whose rightmost column consists of the integers between $1$
and $\lambda_1+n$ that are not in $\lambda+\rho$.
\end{enumerate}
\end{proposition}

\begin{proof}
Suppose we start with a state of the six-vertex model $\mathfrak{S}^{\Gamma}_{\lambda, t}$. Let us record the locations
of all minus spins that live on vertical edges.
Between rows $k$ and $k+1$, there are exactly $n-k$ such minus spins for each $k$. Placing
the column numbers of the locations of these minus spins into a triangular array gives a strict Gelfand-Tsetlin pattern with top row $\lambda+\rho$.

Given a strict Gelfand-Tsetlin pattern $\mathfrak{T}=(t_{ij})$ with top row $\lambda+\rho$, we construct a staircase whose rightmost column is missing $\lambda+\rho$ in the following manner: We fill column $j+1$ with integers $u_1,\ldots,u_{n-j+\lambda_1}$ such that
\[
\{1,2,\ldots,n+\lambda_1 \} = \{u_1,\ldots ,u_{n-j+\lambda_1} \} \sqcup \{ t_{n+1-j,1},\ldots t_{n+1-j,j} \}.
\]
It is easily checked that these maps give the desired bijections.
\end{proof}

We give an example. As before let $n = 3$ and $\lambda = (5,4,1)$. Here is
an admissible state:
\[ \includegraphics{lattice1.mps} \]
Then the entries in the $i$-th row of $\mathfrak{T}$ are to be the
columns $j$ in which a $-$ appears above the $(i, j)$ vertex. Therefore
\[ \mathfrak{T} = \left\{ \begin{array}{lllll}
     8&&6&&2\\
     &7&&4\\
     &&4&& 
   \end{array} \right\} . \]
Taking the complements of the rows (including a fourth empty row)
gives the following sets of numbers:
\[
\begin{array}{cccccccc}
7&5&4&3&1\\8&6&5&3&2&1\\8&7&6&5&3&2&1\\8&7&6&5&4&3&2&1\end{array}\]
so the corresponding staircase is (\ref{stairexample}).

\section{A Tokuyama-like formula for Factorial Schur Functions}
\label{toksec}
A good primary reference for factorial Schur functions
is Macdonald~\cite{MacdonaldVariations}. They are also in
Macdonald~\cite{MacdonaldBook}, Ex.\ 20 in Section I.3 on p.54.

Let $\alpha_1, \alpha_2, \alpha_3, \cdots$ be a sequence of complex numbers or
formal variables. If $z \in \mathbb{C}$ let
\[ (z| \alpha)^r = (z + \alpha_1) \cdots (z + \alpha_r) . \]
Macdonald \cite{MacdonaldVariations} gives two formulas for factorial Schur
functions that we will also prove to be equivalent by our methods. Let 
$\mu = (\mu_1, \cdots, \mu_n)$ where the $\mu_i$ are nonnegative integers. Let $z_1,
\cdots, z_n$ be given. Define
\[ A_{\mu} (z| \alpha) = \det ((z_i | \alpha)^{\mu_j})_{i, j} \]
where $1 \leqslant i, j \leqslant n$ in the determinant. We will also
use the notation
\[\tmmathbf{z}^\mu=\prod_i z_i^{\mu_i}.\]

Let $\delta = (n - 1, n - 2, \cdots, 0)$ and let $\lambda = (\lambda_1, \cdots,
\lambda_n)$ be a partition of length at most $n$. Define
\begin{equation}
\label{fsfdef}
s_{\lambda} (z| \alpha) = \frac{A_{\lambda + \delta} (z| \alpha)}{A_{\delta}
   (z| \alpha)} .
\end{equation}
The denominator here is actually independent of $\alpha$ and is
given by the Weyl denominator formula:
\begin{equation}\label{arho}
A_\delta(z|\alpha)=\prod_{i<j}(z_i-z_j)
\end{equation}
Indeed, it is an alternating polynomial in the $z_i$ of the same degree
as the right-hand side, and so the ratio is a polynomial in $\alpha$ 
that is independent of $z_i$. To see that the ratio is independent
of $\alpha$, one may compare the coefficients of 
$z^\delta$ on both sides of (\ref{arho}). Since both the 
numerator and the denominator is an alternating function of the $z_i$, the
ratio $s_{\lambda} (z| \alpha)$ is a symmetric polynomial in $z_1, \cdots,
z_n$.

Now let us consider two types of Boltzmann weights. Let $z_1, \cdots, z_n$ be
given, and let $\alpha_1, \alpha_2, \alpha_3, \cdots$ another sequence
of complex numbers, and $t$ another parameter. If $1 \leqslant i, k \leqslant
n$ and if $j \geqslant 0$ are integers, we will use the following weights.
\[ \text{$\begin{array}{|c|c|c|c|c|c|c|}
     \hline
     v_{\Gamma} (i, j, t) & \includegraphics{gamma1a.mps} &
     \includegraphics{gamma6a.mps} & \includegraphics{gamma4a.mps} &
     \includegraphics{gamma5a.mps} & \includegraphics{gamma2a.mps} &
     \includegraphics{gamma3a.mps}\\
     \hline
     & 1 & z_i - t \alpha_j & t & z_i + \alpha_j & z_i (t + 1) &
     1\\
     \hline
     v_{\Gamma \Gamma} (i, k, t) & \includegraphics{atom1c.mps} &
     \includegraphics{atom6c.mps} & \includegraphics{atom4c.mps} &
     \includegraphics{atom3c.mps} & \includegraphics{atom2c.mps} &
     \includegraphics{atom5c.mps}\\
     \hline
     & tz_i + z_k & tz_k + z_i & t (z_k - z_i) & z_i - z_k & (t + 1) z_i & (t
     + 1) z_k\\
     \hline
   \end{array}$} \]
\begin{lemma}
  We may take $u = v_{\Gamma \Gamma} (i, k, t)$, $v = v_{\Gamma} (i, j, t)$
  and $w = v_{\Gamma} (k, j, t)$ in Proposition~\ref{ybe}.
\end{lemma}

\begin{proof}
  The relation $a_1 (u) = a_1 (v) a_2 (w) + b_2 (v) b_1 (w)$ becomes
  \[ t z_i + z_k = 1 \cdot (z_k - t \alpha_j) + (z_i + \alpha_j)
     t, \]
  and all the other relations are checked the same way.
\end{proof}

\begin{proposition}
  \label{symprop}The function
  \begin{equation}
    \label{multipliedz} \left[ \prod_{i > j} (t z_j + z_i) \right] Z
    (\mathfrak{S}_{\lambda,t}^\Gamma),
  \end{equation}
  is symmetric under permutations of the $z_i$.
\end{proposition}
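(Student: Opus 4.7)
The plan is a standard ``train argument'' using the Yang--Baxter equation. Since $S_n$ is generated by the adjacent transpositions $s_k = (z_k\ z_{k+1})$, it suffices to prove invariance under each $s_k$ separately. Under $s_k$ the only factor of $\prod_{i>j}(tz_j + z_i)$ that changes is $tz_k + z_{k+1} \mapsto tz_{k+1} + z_k$, so the stated symmetry is equivalent to the identity
\[
(tz_k + z_{k+1})\, Z(\mathfrak{S}^\Gamma_{\lambda,t})(z) \;=\; (tz_{k+1} + z_k)\, Z(\mathfrak{S}^\Gamma_{\lambda,t})(s_k z).
\]

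To prove this, I would attach an R-vertex of type $v_{\Gamma\Gamma}(k, k+1, t)$ to the left boundary of the lattice, between rows $k$ and $k+1$, and consider the partition function of the resulting extended system. Because every left-boundary edge of $\mathfrak{S}^\Gamma_{\lambda,t}$ carries spin $+$, the two edges of the attached R-vertex that interface with the original lattice boundary are both $+$; from the $v_{\Gamma\Gamma}$ weight table the only admissible configuration is then $a_1$, of Boltzmann weight $tz_k + z_{k+1}$. Hence the extended partition function equals $(tz_k + z_{k+1})\, Z(\mathfrak{S}^\Gamma_{\lambda,t})(z)$.

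Next I would apply Theorem \ref{ybe}, justified by the preceding lemma with $u = v_{\Gamma\Gamma}(k, k+1, t)$, $v = v_\Gamma(k, j, t)$, $w = v_\Gamma(k+1, j, t)$, column by column, sliding the R-vertex rightward across all $n + \lambda_1$ columns of the lattice. Each application leaves the overall partition function unchanged while locally swapping the spectral parameters attached to rows $k$ and $k+1$. When the R-vertex finally emerges on the right boundary, rows $k$ and $k+1$ now carry Boltzmann weights computed at $s_k z$, and the two edges of the R-vertex facing the bulk are both $-$; the only admissible configuration is then $a_2$, of weight $tz_{k+1} + z_k$. So the extended partition function also equals $(tz_{k+1} + z_k)\, Z(\mathfrak{S}^\Gamma_{\lambda,t})(s_k z)$, and equating the two expressions gives the required identity.

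The main obstacle is the bookkeeping at the two endpoints, namely verifying that $(+,+)$ inputs to the R-vertex uniquely force the $a_1$ pattern (with weight $tz_k + z_{k+1}$) and that $(-,-)$ inputs uniquely force the $a_2$ pattern (with weight $tz_{k+1} + z_k$). This is a short check against the $v_{\Gamma\Gamma}$ weight table and the pictorial definitions of the six configurations, but one must take care with the orientation conventions so that the prefactors obtained at the two ends are exactly the ones that control how $\prod_{i>j}(tz_j + z_i)$ transforms under $s_k$.
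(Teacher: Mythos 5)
Your proposal is correct and follows essentially the same route as the paper: reduce to an adjacent transposition, attach the $v_{\Gamma\Gamma}(k,k+1,t)$ vertex on the left where the all-$+$ boundary forces the unique configuration of weight $tz_k+z_{k+1}$, slide it across with repeated applications of Theorem~\ref{ybe}, and read off the weight $tz_{k+1}+z_k$ forced by the all-$-$ right boundary. The endpoint bookkeeping you flag as the main obstacle is exactly the check the paper performs, and it goes through as you describe.
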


\begin{proof}
  Let
  $\mathfrak{S}=\mathfrak{S}_{\lambda, t}^{\Gamma}$. It is sufficient to show
  that (\ref{multipliedz}) is invariant under the interchange of $z_i$ and
  $z_{i + 1}$. The factors in front are permuted by this interchange with one
  exception, which is that $t z_i + z_{i + 1}$ is turned into $t z_{i + 1} +
  z_i$. Therefore we want to show that $(t z_i + z_{i + 1}) Z (\mathfrak{S}) =
  (t z_{i + 1} + z_i) Z (\mathfrak{S}')$ where $\mathfrak{S}'$ is the system
  obtained from $\mathfrak{S}$ by interchanging the weights in the $i, i + 1$
  rows. Now consider the modified system obtained from $\mathfrak{S}$ by
  attaching $v_{\Gamma \Gamma} (i, i + 1, t)$ to the left of the $i, i + 1$ rows.
  For example, if $i = 1$, this results in the following system:
  \[ \includegraphics{lattice3.mps} \]
  Referring to (\ref{boltzmannweights}), there is only one admissible
  configuration for the two interior edges adjoining the new vertex,
  namely both must be $+$, and so the Boltzmann weight of this vertex
  will be $t z_i+z_{i+1}$. Thus the partition function of this new system
  equals $(t z_i + z_{i + 1}) Z (\mathfrak{S})$. Applying the Yang-Baxter
  equation repeatedly, this equals the partition function of
  the system obtained from $\mathfrak{S}'$ by adding $v_{\Gamma \Gamma} (i, i
  + 1, t)$ to the right of the $i, i + 1$ rows, that is, $\text{$(t z_{i + 1}
  + z_i) Z (\mathfrak{S}')$}$, as required. This proves that
  (\ref{multipliedz}) is symmetric.
\end{proof}

\begin{theorem}
  \label{icerepn}We have
  \[ Z (\mathfrak{S}^{\Gamma}_{\lambda, t}) = \left[ \prod_{i < j} (t z_j +
     z_i) \right] s_{\lambda} (z| \alpha) . \]
\end{theorem}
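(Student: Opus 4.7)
By Proposition~\ref{symprop}, the product $P(z,\alpha) := \bigl[\prod_{i>j}(tz_j+z_i)\bigr]\cdot Z(\mathfrak{S}^\Gamma_{\lambda,t})$ is symmetric in $z_1,\ldots,z_n$, so $A_\delta(z|\alpha)\,P(z,\alpha)$ is alternating in $z$ and therefore admits a unique expansion
\[
A_\delta(z|\alpha)\,P(z,\alpha) = \sum_\mu c_\mu(\alpha)\,A_{\mu+\delta}(z|\alpha)
\]
over partitions $\mu$ of length at most $n$. In view of \eqref{fsfdef}, the theorem is equivalent to showing this sum reduces to $\bigl[\prod_{i\neq j}(tz_j+z_i)\bigr]\cdot A_{\lambda+\delta}(z|\alpha)$, i.e.\ the $\mu=\lambda$ term multiplied by the symmetric factor $\prod_{i\neq j}(tz_j+z_i)$.

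To control the range of $\mu$'s that can appear, I would first establish degree bounds on $Z$. Direct inspection of the weights $v_\Gamma(i,j,t)$ shows that each vertex contributes a factor of degree at most one in $z_i$, and the boundary conditions on the system (left and bottom $+$, right $-$, and $-$ signs on top exactly at the columns $\lambda_j + n - j + 1$) impose a conservation law forcing a definite number of vertex configurations in each row to be $z_i$-independent. This bounds $Z$ in each $z_i$ and, together with the degrees of $A_\delta$ and $\prod_{i>j}(tz_j+z_i)$, restricts the alternating expansion to finitely many $\mu$.

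To pin down the coefficients, I would induct on $n$. The base case $n=1$ is a one-row calculation in which the unique admissible state corresponds to the $-$ sign at the single column $\lambda_1+1$, and its Boltzmann weight directly equals $(z_1|\alpha)^{\lambda_1}=s_\lambda(z_1|\alpha)$ (the empty product $\prod_{i<j}$ being $1$). For the inductive step, I would extract the top-degree coefficient in $z_1$: using the bijection of Proposition~\ref{gtp}, the states contributing to this top coefficient correspond to GT patterns whose second row achieves the lex-minimal interleaving of $\lambda+\rho$, and these are in natural bijection with states of the rank-$(n-1)$ system for the partition $(\lambda_2,\ldots,\lambda_n)$. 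This matches the Laplace expansion of $A_{\lambda+\delta}$ along its first row, and the inductive hypothesis together with the symmetry established in Proposition~\ref{symprop} (which allows us to move from top coefficient in $z_1$ to the analogous statements for all $z_i$) completes the identification.

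The main obstacle is the combinatorial bookkeeping in the inductive step: the Boltzmann weights along the peeled top row must reassemble into the correct entry $(z_1|\alpha)^{\lambda_j+n-j}$ of the determinant $A_{\lambda+\delta}$, while the $t$-factors produced by the row splitting must agree with $\prod_{j>1}(tz_j+z_1)$. The Yang-Baxter symmetry from Proposition~\ref{symprop} is what permits this argument to be carried out using only a single extremal state in each inductive step rather than requiring a full enumeration of all states.
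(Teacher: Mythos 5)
Your strategy (symmetrize via Proposition~\ref{symprop}, then identify the resulting polynomial by induction on $n$ through a branching/Laplace-expansion argument) is genuinely different from the paper's, but as written it has two gaps that I do not see how to close. First, the claim that symmetry lets you get away with ``a single extremal state in each inductive step'' is unjustified: a symmetric polynomial is not determined by its top-degree coefficient in $z_1$ (equivalently, by symmetry, in each $z_i$) --- for instance $e_1(z)^2$ and $e_1(z)^2+e_2(z)$ are both symmetric and have the same leading coefficient in every variable. So after matching the extremal term against the Laplace expansion of $A_{\lambda+\delta}$ you would still have to control all lower-order terms, which forces the full enumeration of row-one configurations that you explicitly hope to avoid.

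Second, your description of the inductive step is only correct at $t=-1$. For general $t$ the configuration with Boltzmann weight $z_i(t+1)$ does not vanish, so the second row of the Gelfand--Tsetlin pattern can be an arbitrary strict interleaving of $\lambda+\rho$ rather than one obtained by discarding a single entry; consequently the weight of a peeled top row is a genuinely complicated polynomial in $t$ and the $\alpha_j$, not something that ``reassembles into $(z_1|\alpha)^{\lambda_j+n-j}$'' times the factors $\prod_{j>1}(tz_j+z_1)$. The collapse you describe happens only at $t=-1$. The paper supplies exactly the missing ingredient that licenses such a specialization: a degree count showing that $Z(\mathfrak{S}^{\Gamma}_{\lambda,t})$ and $\prod_{i<j}(tz_j+z_i)$ both have degree $\tfrac12 n(n-1)$ in $t$, so the ratio (\ref{shiftratio}) is a polynomial independent of $t$; it can then be evaluated at $t=-1$, where the surviving states correspond to permutations and the partition function becomes $\pm A_{\lambda+\delta}(z|\alpha)$ outright, with no induction on $n$ required (the sign being fixed by a positivity check at $t=0$, $z_i=1$). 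To repair your argument you should either prove this $t$-independence first and set $t=-1$ (after which the induction is superfluous), or else prove the full $t$-deformed branching identity for all interleavings, which is a substantially larger undertaking than your sketch acknowledges. Your base case $n=1$ and the observation that the $A_{\mu+\delta}(z|\alpha)$ form a basis of alternating polynomials are fine, though note the coefficients $c_\mu$ in your expansion would a priori depend on $t$ as well as $\alpha$.
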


\begin{proof}
  We show that the ratio
  \begin{equation}
    \label{shiftratio} \frac{Z (\mathfrak{S}^{\Gamma}_{\lambda, t})}{\prod_{i
    < j} (t z_j + z_i)}
  \end{equation}
  is a polynomial in the $z_i$, and that it is independent of $t$. Observe
  that (\ref{multipliedz}) is an element of the polynomial ring
  $\mathbb{C}[z_1, \cdots, z_n, t]$, which is a unique factorization domain.
  It is clearly divisible by $t z_j + z_i$ when $i > j$, and since it is
  symmetric, it is therefore divisible by all $t z_j + z_i$ with $i \neq j$.
  These are coprime, and therefore it is divisible by their product, in other
  words (\ref{shiftratio}) is a polynomial. We note that the numerator and the
  denominator have the same degree in $t$, namely $\frac{1}{2} n (n - 1)$. For
  the denominator this is clear and for the numerator, we note that each term
  is a monomial whose degree is the number of vertices with a $-$ spin on the
  vertical edge below. This is the number of vertical edges labeled $-$
  excluding those at the top, that is, the number of entries in the
  Gelfand-Tsetlin pattern in Lemma~\ref{gtp} {\tmem{excluding}} the first row.
  Therefore each term in the sum $Z (\mathfrak{S}_{\lambda, t}^{\Gamma})$ is a
  monomial of degree $ \frac{1}{2} n (n - 1)$ and so the sum has at most this
  degree; therefore the ratio (\ref{shiftratio}) is a polynomial of degree $0$
  in $t$, that is, independent of $t$.
  
  To evaluate it, we may choose $t$ at will. We take $t = - 1$.
  
  Let us show that if the pattern occurs with nonzero Boltzmann weight, then
  every row of the pattern (except the top row) is obtained from the row above
  it by discarding one element. Let $\mu$ and $\nu$ be two partitions that
  occur as consecutive rows in this Gelfand-Tsetlin pattern:
  \[ \left\{ \begin{array}{lllllllll}
       \mu_1 &  & \mu_2 &  & \cdots &  & \cdots &  & \mu_{n - i + 1}\\
       & \nu_1 &  & \nu_2 &  & \cdots &  & \nu_{n - i} & 
     \end{array} \right\}, \]
  where $\mu_k$ are the column numbers of the vertices $(i, \mu_k)$ that have a
  $-$ spin on the edge above the vertex, and $\nu_k$ are the column numbers of the
  vertices $(i, \nu_k)$ that have a $-$ spin on the edge below it. Because $t = - 1$,
  the pattern
  \[ \includegraphics{gamma2a.mps} \]
  does not occur, or else the Boltzmann weight is zero, and the term may be
  discarded. Therefore every $-$ spin below the vertex in the $i$-th row must be
  matched with a $-$ above the vertex. It follows that every $\nu_i$ equals
  either $\mu_i$ or $\mu_{i + 1}$. Thus the partition $\nu$ is obtained from
  $\mu$ by discarding one element.
  
  Let $\mu_k$ be the element of $\mu$ that is not in $\nu$. It is easy to see
  that in the horizontal edges in the $i$-th row, we have a $-$ spin to the right of
  the $\mu_k$-th column and $+$ to the left. Since $t = - 1$, the Boltzmann
  weights of the two patterns:
  \[ \begin{array}{l}
     \end{array} \includegraphics{gamma5a.mps} \hspace{2em}
     \includegraphics{gamma6a.mps} \]
  both have the same Boltzmann weight $z_i + \alpha_{j}$. We have one such
  contribution from every column to the right of $\mu_k$ and these contribute
  \[ \prod_{j = 1}^{\mu_k - 1} (z_i + \alpha_{j}) = (z_i | \alpha)^{\mu_k-1}
     . \]
  
  For each column $j = \mu_l$ with $l < k$ we have a pattern
  \[ \includegraphics{gamma4a.mps} \]
  and these contribute $- 1$. Therefore the product of the Boltzmann weights
  for this row is
  \[ (- 1)^{k - 1} (z_i | \alpha)^{\mu_k-1} . \]
  
  Since between the $i$-th row $\mu$ and the $(i+1)$-st row $\nu$ of the
  Gelfand-Tsetlin pattern one element is discarded, there is some permutation
  $\sigma$ of $\{1, 2, 3, \cdots, n\}$ such that $\nu$ is obtained
  by dropping the $\sigma (i)$-th element of $\mu$. In other words, 
\[\mu_k-1 = (\lambda + \rho)_{\sigma (i)}-1 = (\lambda + \delta)_{\sigma (i)}\]
  and we conclude that
  \[ Z (\mathfrak{S}_{\lambda, - 1}^{\Gamma}) = \sum_{\sigma \in S_n} \pm
     \prod_i (z_i | \alpha)^{(\lambda + \delta)_{\sigma (i)}} . \]
  The signs may be determined as follows. First, take all $\alpha_i = 0$ and
  $t=-1$. The term
  corresponding to $\sigma$ is then $\pm \prod_i z_i^{(\lambda + \delta)_{\sigma
  (i)}}$. The ratio (\ref{shiftratio}) is symmetric, and with $t = - 1$ the
  denominator is antisymmetric. This shows
  \[ Z (\mathfrak{S}_{\lambda, - 1}^{\Gamma}) = \pm \sum_{\sigma \in S_n} (-
     1)^{l (\sigma)} \prod_i (z_i | \alpha)^{(\lambda + \delta)_{\sigma (i)}} = \pm
     A_{\lambda + \delta} (z|\alpha) . \]
  We still need to determine the leading $\pm$.
  Using (\ref{arho}) we see that (\ref{shiftratio}) equals $\pm
  s_{\lambda} (z| \alpha)$. To evaluate the sign, we may take $t = 0$ and all
  $z_i = 1$. Then the partition function is a sum of positive terms, and
  $s_{\lambda} (1, \cdots, 1)$ is positive, proving that (\ref{shiftratio})
  equals $s_{\lambda} (z| \alpha)$.
\end{proof}

\section{The Combinatorial Definition of Factorial Schur Functions\label{thmtwo}}

Macdonald also gives a combinatorial formula for the factorial Schur function
as a sum over semi-standard Young tableaux of shape $\lambda$ in 
$\{1, 2, \cdots, n\}$. Let $T$ be such a tableau. This generalizes the
well-known combinatorial formula for Schur functions. If $i, j$ are given such that $j
\leqslant \lambda_i$, let $T (i, j)$ be the entry in the $i$-th row and $j$-th
column of $T$. Let
\begin{equation}
  T^{\ast} (i, j) = T (i, j) + j - i \label{tstardef},
\end{equation}
and define
\begin{equation}
\label{zaltdef}
(z| \alpha)^T = \prod_{(i, j)} (z_{T (i, j)} + \alpha_{T^{\ast} (i, j)}) .
\end{equation}
For example, if $\lambda = (4, 2, 0)$ and $n = 3$, then we might have
\begin{equation}
  \label{tableauxexample} T = \begin{array}{l}
    \begin{array}{|l|l|l|l|}
      \hline
      1 & 1 & 1 & 3\\
      \hline
    \end{array}\\
    \begin{array}{|l|l|}
      \hline
      2 & 2\\
      \hline
    \end{array}
  \end{array}, \hspace{2em} T^{\ast} = \begin{array}{l}
    \begin{array}{|l|l|l|l|}
      \hline
      1 & 2 & 3 & 6\\
      \hline
    \end{array}\\
    \begin{array}{|l|l|}
      \hline
      1 & 2\\
      \hline
    \end{array}
  \end{array}
\end{equation}
and
\[ (z| \alpha)^T = (z_1 + \alpha_1) (z_1 + \alpha_2) (z_1 + \alpha_3) (z_3 +
   \alpha_6) (z_2 + \alpha_1) (z_2 + \alpha_2) . \]
\begin{theorem}
  \label{theoremtwo}Let $\lambda$ be a partition. Then
  \begin{equation}
    \label{tableaudef}
    s_{\lambda} (z| \alpha) = \sum_T (z| \alpha)^T,
  \end{equation}
  where the sum is over semistandard Young tableaux with shape $\lambda$ in
  $1, 2, 3, \cdots, n$.
\end{theorem}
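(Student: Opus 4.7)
The plan is to prove Theorem~\ref{theoremtwo} by induction on the number $n$ of spectral parameters, establishing that both sides of (\ref{tableaudef}) satisfy the same branching recursion when $z_n$ is split off:
\[ s_\lambda(z_1,\ldots,z_n|\alpha) = \sum_{\mu} \Bigl(\prod_{(i,j)\in\lambda/\mu}(z_n+\alpha_{n+j-i})\Bigr)\,s_\mu(z_1,\ldots,z_{n-1}|\alpha), \]
where $\mu$ ranges over partitions of length at most $n-1$ such that $\lambda/\mu$ is a horizontal strip, i.e.\ $\lambda_{i+1}\leq\mu_i\leq\lambda_i$. The base case $n=1$ is immediate: the only tableau of shape $(\lambda_1)$ is a row of $1$s, whose weight $(z_1|\alpha)^{\lambda_1}$ coincides with $A_{\lambda_1}(z_1|\alpha)/A_0(z_1|\alpha)$.

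For the tableau side of (\ref{tableaudef}) the recursion is transparent. Any semistandard tableau $T$ of shape $\lambda$ with entries in $\{1,\ldots,n\}$ decomposes uniquely into the boxes filled with $n$---which, by the semistandard condition, form a horizontal strip $\lambda/\mu$---and the complementary subtableau $T'$ of shape $\mu$ with entries in $\{1,\ldots,n-1\}$. By (\ref{tstardef}), for each strip box $(i,j)$ we have $T^*(i,j)=n+j-i$, so $(z|\alpha)^T=\prod_{(i,j)\in\lambda/\mu}(z_n+\alpha_{n+j-i})\cdot(z|\alpha)^{T'}$, and summing over $\mu$ and $T'$ produces the recursion.

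The substantive step is verifying the same recursion for the ratio $A_{\lambda+\delta}/A_\delta$. Using the symmetry in the $z_i$ provided by Theorem~\ref{theoremone}, it suffices to isolate the dependence on $z_n$. I would Laplace-expand $A_{\lambda+\delta}(z|\alpha)$ along its $n$-th row, whose $j$-th entry is $(z_n|\alpha)^{\lambda_j+n-j}$. Using the telescoping identity $(z_n|\alpha)^{\lambda_j+n-j}=(z_n|\alpha)^{\mu_k+n-1-k}\cdot\prod_{r=\mu_k+n-k}^{\lambda_j+n-j}(z_n+\alpha_r)$ for appropriate choices of $k$, one can group the expansion by interlacing sequences and, invoking (\ref{arho}) to cancel the factor $\prod_{i<n}(z_i-z_n)$ against the corresponding piece of $A_\delta(z|\alpha)$, identify the surviving $(n-1)\times(n-1)$ minors with $A_{\mu+\delta'}(z_1,\ldots,z_{n-1}|\alpha)$ where $\delta'=(n-2,\ldots,0)$. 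The main obstacle is the combinatorial bookkeeping of this Pieri-type expansion: one must verify that the strips $\lambda/\mu$ that arise with nonzero coefficient on the alternant side are precisely the horizontal strips appearing on the tableau side, and that the shift indices $n+j-i$ match on both sides. Once this matching is established, the inductive hypothesis applied to each term $A_{\mu+\delta'}/A_{\delta'}$ closes the argument.
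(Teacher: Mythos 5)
Your strategy---induction on $n$ via the branching rule
\[
s_\lambda(z_1,\ldots,z_n|\alpha)=\sum_{\mu}\Bigl(\prod_{(i,j)\in\lambda/\mu}(z_n+\alpha_{n+j-i})\Bigr)\,s_\mu(z_1,\ldots,z_{n-1}|\alpha)
\]
over horizontal strips---is the classical route and is genuinely different from the paper's, which instead specializes $t=0$ in Theorem~\ref{icerepn}: at $t=0$ the only surviving states of $\mathfrak{S}^{\Gamma}_{\lambda,t}$ are those whose Gelfand--Tsetlin patterns correspond to semistandard tableaux, and the Boltzmann weight of such a state matches $(z|\alpha)^T$ box by box after applying $w_0$. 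Your handling of the tableau side of the recursion and of the base case $n=1$ is correct, and the recursion itself is the right one (no shift of the alphabet $\alpha$ is needed).

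The gap is on the alternant side, which is the entire content of the induction step. Laplace expansion of $A_{\lambda+\delta}$ along the $n$-th row yields $n$ cofactor terms $(-1)^{n-j}(z_n|\alpha)^{\lambda_j+n-j}M_{nj}$, and the minor $M_{nj}$ equals $A_{\nu^{(j)}+\delta'}(z_1,\ldots,z_{n-1}|\alpha)$ with $\nu^{(j)}=(\lambda_1+1,\ldots,\lambda_{j-1}+1,\lambda_{j+1},\ldots,\lambda_n)$. These $\nu^{(j)}$ are not the partitions $\mu$ with $\lambda/\mu$ a horizontal strip, and---more seriously---no individual cofactor term is divisible by $\prod_{i<n}(z_i-z_n)$ (the factor $(z_n|\alpha)^{\lambda_j+n-j}$ vanishes at $z_n=-\alpha_r$, not at $z_n=z_i$), so the advertised cancellation against $A_\delta$ cannot be performed term by term; only the full alternating sum is divisible. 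The ``telescoping and regrouping by interlacing sequences'' that is supposed to convert these $n$ terms into the (generally far more numerous) horizontal-strip terms is therefore exactly the lemma to be proved, and as written it is asserted rather than established. As one sign that the bookkeeping is delicate: with $j=k$ your telescoping product carries $\lambda_k-\mu_k+1$ factors, while the strip in row $k$ contributes only $\lambda_k-\mu_k$ boxes. The branching rule is true and provable---for instance by observing that $A_{\lambda+\delta}(z|\alpha)/\bigl(A_{\delta'}(z_1,\ldots,z_{n-1})\prod_{i<n}(z_i-z_n)\bigr)$ is a polynomial of degree $\lambda_1$ in $z_n$ and pinning it down by evaluation at the points $z_n=-\alpha_r$, in the spirit of the vanishing theorem---but some such argument must be supplied before your induction closes.
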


This formula, expressing the factorial Schur function as a sum over
semistandard Young tableaux, is equivalent (in a special case) to a formula
of Biedenharn and Louck~\cite{BiedenharnLouck}, who made it the definition of the factorial Schur
function. For them, the sum was over the set of Gelfand-Tsetlin patterns with
prescribed top row, but this is in bijection with tableaux. In this
generality, the formula is due to Macdonald~\cite{MacdonaldVariations}.

When $t = 0$, the Boltzmann weight for \begin{tabular}{l}
  $\includegraphics{gamma4a.mps}$
\end{tabular}is zero, so we are limited to states omitting this configuration.
If $\mathfrak{T}=\mathfrak{T}_{\lambda + \rho}$ is the Gelfand-Tsetlin pattern
corresponding to this state, and if the entries of $\mathfrak{T}$ in are
denoted $p_{i k}$ as in (\ref{lamrhopat}), it is easy to see that the equality
$p_{i - 1, k - 1} = p_{i, k}$ would cause this configuration to appear at the
$i, j$ position, where $j = p_{i, k}$. Therefore $p_{i - 1, k - 1} > p_{i,
k}$. This inequality implies that we may obtain another Gelfand-Tsetlin
pattern $\mathfrak{T}_{\lambda}$ with top row $\lambda$ by subtracting
$\rho_{n - i + 1} = (n - i+1, n - i , \cdots, 1)$ from the $i$-th row of
$\mathfrak{T}_{\lambda + \rho}$.

Consider the example of $\lambda = (4, 2, 0)$ with $n = 3$. Then
\[ \text{if{\hspace{1em}}$\mathfrak{T}_{\lambda + \rho} = \left\{
   \begin{array}{ccccc}
     7 &  & 4 &  & 1\\
     & 5 &  & 3 & \\
     &  & 4 &  & 
   \end{array}
   \right\}${\hspace{1em}}then{\hspace{1em}}$\mathfrak{T}_{\lambda} = \left\{
   \begin{array}{ccccc}
     4 &  & 2 &  & 0\\
     & 3 &  & 2 & \\
     &  & 3 &  & 
   \end{array} \right\}$} . \]
We associate with $\mathfrak{T}_{\lambda}$ a tableau $T (
\mathfrak{T}_{\lambda})$ of shape $\lambda$. In this tableau, removing all
boxes labeled $\begin{array}{|l|}
  \hline
  n\\
  \hline
\end{array}$ from the diagram produces a tableau whose shape is the second row
of $\mathfrak{T}_{\lambda}$. Then removing boxes labeled $\begin{array}{|l|}
  \hline
  n - 1\\
  \hline
\end{array}$ produces a tableau whose shape is the third row of
$\mathfrak{T}_{\lambda}$, and so forth. Thus in the example, $T
(\mathfrak{T}_{\lambda})$ is the tableau $T$ in (\ref{tableauxexample}).
Let $w_0$ denote the long element of the Weyl group $S_n$, which is the
permutation $i\mapsto n+1-i$.

\begin{proposition}
  Let $t = 0$, and let $\mathfrak{s}=\mathfrak{s}( \mathfrak{T}_{\lambda +
  \rho})$ be the state corresponding to a special Gelfand-Tsetlin pattern
  $\mathfrak{T}_{\lambda + \rho}$. 
  With $\mathfrak{T}_{\lambda}$ as above and
  $T = T ( \mathfrak{T}_{\lambda})$ we have
  \begin{equation}
    \label{wostate} w_0 \left( \prod_{v \in \mathfrak{s}} \beta_{\mathfrak{s}}
    (v) \right) = \tmmathbf{z}^{w_0 (\delta)} (z|a)^T .
  \end{equation}
\end{proposition}

Before describing the proof, let us give an example. With
$\mathfrak{T}_{\lambda + \rho}$ as above, the state $\mathfrak{s}(
\mathfrak{T}_{\lambda + \rho})$ is
\[ \includegraphics{lattice2.mps} \]
The locations labeled $\circ$ produce powers of $z_i$, and the locations
labeled $\bullet$ produce shifts of the form $z_i + \alpha_{j}$. The
weight of this state is
\[ \prod_{v \in \mathfrak{s}} \beta_{\mathfrak{s}} (v) = z_1^2 z_2 (z_1 +
   \alpha_6) (z_2 + \alpha_2) (z_2 + \alpha_1) (z_3 + \alpha_3) (z_3 +
   \alpha_2) (z_3 + \alpha_1) . \]
Applying $w_0$ interchanges $z_i \longleftrightarrow z_{4 - i}$. The factor
$z_1^2 z_2$ becomes $z_3^2 z_2 = \tmmathbf{z}^{w_0 (\delta)}$, and the terms
that remain agree with
\[ \prod_{(i, j)} (z_{T (i, j)} + \alpha_{T^{\ast} (i, j)}) . \]
\begin{proof}
  We are using the following Boltzmann weights.
  \[ \begin{array}{|c|c|c|c|c|c|c|}
       \hline
       \begin{array}{c}
         \tmop{Gamma}\\
         \text{Ice}
       \end{array} & \includegraphics{gamma1a.mps} &
       \includegraphics{gamma6a.mps} & \includegraphics{gamma4a.mps} &
       \includegraphics{gamma5a.mps} & \includegraphics{gamma2a.mps} &
       \includegraphics{gamma3a.mps}\\
       \hline
       \text{\begin{tabular}{c}
         Boltzmann\\
         weight 
       \end{tabular}} & 1 & z_i & 0 & z_i + \alpha_{j} & z_i & 1\\
       \hline
     \end{array} \]
  We have contributions of $z_i$ from vertices that have $-$ on the vertical
  edge below, and there is one of these for each entry in the Gelfand-Tsetlin
  pattern. These contribute a factor of $\tmmathbf{z}^{\delta}$. Applying $w_0$
  to $\tmmathbf{z}$ leads to $\tmmathbf{z}^{w_0 (\delta)}$.
  
  Considering the contribution from the $(i, j)$ vertex, there will be a
  factor of $z_i + \alpha_{j}$ when the vertex has the configuration
  \begin{tabular}{l}
    \includegraphics{gamma5a.mps}
  \end{tabular}. In the above example, the locations are labeled by $\bullet$.
  
  Let $\mathfrak{T}_{\lambda + \rho}$ be the Gelfand-Tsetlin pattern
  (\ref{lamrhopat}). Let $\mathfrak{T}_{\lambda}$ and $T = T
  (\mathfrak{T}_{\lambda})$ be as described above, and let $T^{\ast}$ be as in
  (\ref{tstardef}). Let the entries in $\mathfrak{T}_{\lambda+\rho}$ and
  $\mathfrak{T}_\lambda$ be denoted $p_{i,j}$ and $q_{i,j}$, with the
  indexing as in (\ref{lamrhopat}). Thus $q_{i,j}=p_{i,j}-n+j-1$. We will
  make the convention that $q_{i,n+1}=p_{i,n+1}=0$. 
  The condition for $\bullet$ in the $i, j$ position of the
  state is that for some $k$ with $i\leqslant k\leqslant n$ we have
  $p_{i+1,k+1} < j < p_{i,k}$.
  Translating this in terms of the $q_{i,j}=p_{i,j}-n+j-1$ the condition
  becomes $q_{i+1,k+1}+n-k+1 \leqslant j < q_{i,k}+n-k$. The effect of
  $w_0$ is to interchange $z_i \leftrightarrow z_{n - i + 1}$, and therefore
\begin{equation}
\label{wobw}
w_0 \left( \prod_{v \in \mathfrak{s}} \beta_{\mathfrak{s}} (v) \right)=
\tmmathbf{z}^{w_0(\delta)}\prod_{i=1}^n\quad\prod_{k=i}^n\quad\prod_{j=q_{i+1,k+1}+n-k+1}^{q_{i,k}+n-k}(z_{n+1-i}+\alpha_j).
\end{equation}
  On the other hand, in the tableau $T$, the location of the entries
  equal to $\begin{array}{|l|}\hline n+1-i\\\hline\end{array}$ in the
  $(k+1-i)$-th row is between columns $q_{i+1,k+1}+1$ through $q_{i,k}$,
  and if $j$ is one of these columns then
\[T(k+1-i,j)=n+1-i,\qquad T^*(k+1-i,j)=n+j-k.\]
 Therefore in the notation (\ref{zaltdef}) we have
\[(z| \alpha)^T=\prod_{i=1}^n\quad\prod_{k=i}^n\prod_{j=q_{i+1,k+1}+1}^{q_{i,k}}(z_{n+1-i}+\alpha_{n+j-k}).\]
This equals (\ref{wobw}) and the proof is complete.
\end{proof}

We now give the proof of Theorem~\ref{theoremtwo}. Summing over states, the
last Proposition implies that
\[ Z_{\lambda} (w_0 ( \tmmathbf{z}), \alpha, 0) = \tmmathbf{z}^{w_0 (\delta)}
   \sum_T (z|a)^T . \]
Since $s_{\lambda}^{\Gamma} ( \tmmathbf{z}, \alpha, 0) = s_{\lambda}^{\Gamma}
(w_0 ( \tmmathbf{z}), \alpha, 0)$ we have
\[ s_{\lambda}^{\Gamma} (z, \alpha, 0) = \frac{Z_{\lambda} (w_0 (
   \tmmathbf{z}), \alpha, 0)}{\prod_{i > j} w_0 ( \tmmathbf{z})_j} =
   \frac{\tmmathbf{z}^{w_0 (\delta)} \sum_T (z|a)^T}{\tmmathbf{z}^{w_0 (\delta)}}
   = \sum_T (z|a)^T, \]
and the statement follows.

\section{The limit as t tends to infinity}\label{thmthree}

Let $\mu$ be a partition, and let $\alpha_{\mu}$ denote the sequence
\[ \alpha_{\mu} = (\alpha_{\mu_1 + n}, \alpha_{\mu_2 + n - 1}, \cdots,
   \alpha_{\mu_n + 1}) . \]
If $\lambda$ is a partition then $\lambda'$ will denote the conjugate
partition whose Young diagram is the transpose of that of $\lambda$.

\begin{theorem}
  \label{theoremthree}{\dueto{Vanishing Theorem}}We have
  \[ s_{\lambda} (- \alpha_{\mu} | \alpha) = \left\{ \begin{array}{ll}
       0 & \text{if $\lambda \not\subset \mu$,}\\
       \prod_{(i, j) \in \lambda} (\alpha_{n - i + \lambda_i + 1} - \alpha_{n
       - \lambda'_j + j}) & \text{if $ \lambda=\mu$.}
     \end{array} \right. \]
\end{theorem}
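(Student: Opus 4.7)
The plan is to substitute $z=-\alpha_\mu$ directly into the determinantal formula~\eqref{fsfdef} and analyze the resulting $n\times n$ determinant. Write $\beta_i=\mu_i+n-i+1$; this is a strictly decreasing sequence because $\mu$ is a partition. The substitution yields $A_{\lambda+\delta}(-\alpha_\mu|\alpha)=\det M$ with
\[ M_{ij}=(-\alpha_{\beta_i}|\alpha)^{\lambda_j+n-j}=\prod_{\ell=1}^{\lambda_j+n-j}(\alpha_\ell-\alpha_{\beta_i}), \]
while $A_\delta(-\alpha_\mu|\alpha)=\prod_{i<j}(\alpha_{\beta_j}-\alpha_{\beta_i})$ by~\eqref{arho}. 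Since both sides of the theorem are polynomial in the $\alpha_i$, we may assume the $\alpha_i$ are in general position, making the denominator nonzero. The decisive observation is that $M_{ij}$ vanishes precisely when $\beta_i\in\{1,\ldots,\lambda_j+n-j\}$, i.e.\ when $\mu_i-i+1\le\lambda_j-j$.

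For the vanishing case, assume $\lambda\not\subset\mu$ and choose $k$ with $\lambda_k>\mu_k$. For any $i\ge k$ and $j\le k$, monotonicity of $\mu$ and $\lambda$ combine to give
\[ \beta_i\le\mu_k+n-k+1\le\lambda_k+n-k\le\lambda_j+n-j, \]
so $M_{ij}=0$ throughout the $(n-k+1)\times k$ block of rows $\{k,\ldots,n\}$ and columns $\{1,\ldots,k\}$. By pigeonhole no permutation $\sigma\in S_n$ can map $\{k,\ldots,n\}$ entirely into the $n-k$ columns $\{k+1,\ldots,n\}$, so every term in the Leibniz expansion of $\det M$ contains an entry of this zero block and $\det M=0$.

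For $\lambda=\mu$ the vanishing criterion becomes $\lambda_i+n-i<\lambda_j+n-j$, equivalent (by strict monotonicity of $j\mapsto\lambda_j+n-j$) to $i>j$. Hence $M$ is upper triangular, $\det M=\prod_i\prod_{\ell=1}^{\beta_i-1}(\alpha_\ell-\alpha_{\beta_i})$, and because each factor $\alpha_{\beta_j}-\alpha_{\beta_i}$ with $i<j$ already appears within this product (take $\ell=\beta_j<\beta_i$ in the $i$-th block), dividing by $A_\delta(-\alpha_\lambda|\alpha)$ leaves
\[ s_\lambda(-\alpha_\lambda|\alpha)=\prod_{i=1}^n\prod_{\ell\in S_i}(\alpha_\ell-\alpha_{\beta_i}),\quad S_i:=\{1,\ldots,\beta_i-1\}\setminus\{\beta_{i+1},\ldots,\beta_n\},\quad|S_i|=\lambda_i. \]
The step I expect to demand the most care is the combinatorial identification $S_i=\{n-\lambda'_j+j:1\le j\le\lambda_i\}$, after which the double product over $i$ and $\ell\in S_i$ reorganizes into a product over cells $(i,j)\in\lambda$ with $\ell=n-\lambda'_j+j$, matching the theorem. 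This identification rests on the classical disjoint decomposition $\{1,\ldots,n+\lambda_1\}=\{\beta_1,\ldots,\beta_n\}\sqcup\{n-\lambda'_j+j:1\le j\le\lambda_1\}$, together with the bounds $n-\lambda'_j+j\le\beta_i-1$ when $j\le\lambda_i$ (using $\lambda'_j\ge i$) and $n-\lambda'_j+j\ge\beta_i+1$ when $j>\lambda_i$ (using $\lambda'_j\le i-1$), which together pin down exactly which $\gamma$-values fall below $\beta_i$.
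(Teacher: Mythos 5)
Your proof is correct in substance but takes a genuinely different route from the paper. The paper deduces Theorem~\ref{theoremthree} from its six-vertex model: it specializes the $t=\infty$ partition function of (\ref{infinity}) at $z=-\alpha_\mu$ and shows, by descending induction on the rows of the associated strict Gelfand-Tsetlin pattern, that every state contributes zero unless $\lambda\subset\mu$, and that exactly one state survives when $\lambda=\mu$. You instead evaluate the ratio of determinants (\ref{fsfdef}) directly at $z=-\alpha_\mu$: the reduction to generic $\alpha$ (both sides being polynomials in $\alpha$), the block-of-zeros argument for $\lambda\not\subset\mu$, the upper-triangularity of $M$ when $\lambda=\mu$, and the identification of $S_i$ via the complementary-sets decomposition of $\{1,\dots,n+\lambda_1\}$ all check out. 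Your argument is more elementary --- it needs nothing beyond (\ref{fsfdef}) and (\ref{arho}) and in particular none of the lattice-model machinery --- while the paper's derivation comes for free from its $t=\infty$ specialization and fits the thematic goal of the article; the unique surviving state in the paper's proof plays exactly the role of your unique surviving permutation (the identity, from upper triangularity).

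One point to repair in the write-up: the product your computation actually yields is $\prod_{(i,j)\in\lambda}\bigl(\alpha_{n-\lambda'_j+j}-\alpha_{n-i+\lambda_i+1}\bigr)$, i.e.\ each factor appears as $\alpha_\ell-\alpha_{\beta_i}$ with $\ell=n-\lambda'_j+j$ running over $S_i$, which is $(-1)^{|\lambda|}$ times the formula displayed in the theorem. This is not a flaw in your argument: for $n=1$ and $\lambda=(1)$ one has $s_{(1)}(z_1|\alpha)=z_1+\alpha_1$, hence $s_{(1)}(-\alpha_2|\alpha)=\alpha_1-\alpha_2$, which agrees with your version and not with the printed one, so the discrepancy is a sign slip in the statement. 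But you should say explicitly that your product is the reverse of the one displayed, rather than asserting that it ``matches the theorem.''
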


See Okounkov~{\cite{Okounkov}} Section~2.4 and Molev and
Sagan~{\cite{MolevSagan}}. In view of the relationship between Schubert
polynomials for Grassmannian permutations and factorial Schur functions,
this is equivalent to an older vanishing statement for Schubert polynomials.
Vanishing properties for Schubert polynomials are implied by
Theorem~9.6.1 and Proposition~9.6.2 of Lascoux~\cite{LascouxBook},
which are related to the results of Lascoux and
Sch\"utzenberger~\cite{LascouxSchutz1, LascouxSchutz2}.
We will prove it in this section by our methods.

We examine the behavior of our six-vertex model as we send the parameter $t$ to infinity. 
The first result of this section may be construed as a rederivation of a
theorem of Lascoux \cite[Theorem 1]{LascouxSchubert}. However the approach we
take is to interpret it as giving us a proof of the equivalence of factorial
Schur functions and double Schubert polynomials for Grassmannian
permutations. We also obtain the vanishing theorem for factorial Schur
functions (Theorem~\ref{theoremthree}).

We start with two simple lemmas. To state these, we describe the six admissible arrangements of edge spins around a vertex as types
$a_1$, $a_2$, $b_1$, $b_2$, $c_1$ and $c_2$ respectively, when reading from left to right in the diagram (\ref{boltzmannweights}).

\begin{lemma}\label{bce}
In each state, the total number of sites of type $a_2$, $b_1$ and $c_1$ is equal to $n(n-1)/2$.
\end{lemma}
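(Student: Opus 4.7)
The plan is to identify the three vertex types $a_2$, $b_1$, $c_1$ as precisely those admissible configurations whose south edge carries a $-$ spin, and then to observe that the total count of such vertices in any state is fixed by the boundary conditions of $\mathfrak{S}^{\Gamma}_{\lambda,t}$, independent of the state.

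First, by inspecting the six pictures in (\ref{boltzmannweights}), one verifies that the admissible configurations with south spin equal to $-$ are exactly $a_2$, $b_1$, and $c_1$, whereas $a_1$, $b_2$, $c_2$ all have south spin $+$. This identification is consistent with the analysis at $t = -1$ inside the proof of Theorem~\ref{icerepn}: the vanishing of the weight $z_i(t+1)$ there rules out $c_1$ configurations, after which every $-$ on a south edge must be matched by a $-$ on the corresponding north edge, leaving only the two vertical-minus types $a_2$ and $b_1$.

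Given this identification, for any state $\mathfrak{s}$ the count on the left-hand side equals the total number of $-$ signs appearing on vertical edges that lie below some vertex, namely the edges separating consecutive rows of the lattice together with the bottom boundary edges. By the imposed boundary conditions the bottom boundary is uniformly $+$, contributing nothing, and by Proposition~\ref{gtp} the vertical edges between rows $k$ and $k+1$ carry exactly $n-k$ minus signs for $1 \leq k \leq n-1$. Summing,
\[ \sum_{k=1}^{n-1}(n-k) \;=\; \frac{n(n-1)}{2}, \]
which establishes the lemma. The only obstacle is the bookkeeping of reading off which three of the six diagrams in (\ref{boltzmannweights}) have a $-$ on the south edge; everything else is an immediate appeal to the boundary conditions and the bijection of Proposition~\ref{gtp}.
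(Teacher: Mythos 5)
Your proof is correct and follows essentially the same route as the paper: the paper's one-line proof also identifies the vertices of type $a_2$, $b_1$, $c_1$ with the minus signs on interior vertical edges (the bottom boundary being all $+$), and the count $\sum_{k=1}^{n-1}(n-k)=n(n-1)/2$ then comes from the Gelfand-Tsetlin bijection of Proposition~\ref{gtp} exactly as you say. Your version merely makes explicit the bookkeeping that the paper leaves to the reader.
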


\begin{proof}
This number is equal to the number of minus spins located in the interior of a vertical string. 
\end{proof}

Let $\mu$ be the partition $(\lambda+\delta)'$.

\begin{lemma}\label{bde}
In each state, the number of occurrences of $a_2$, $b_2$ and $c_1$ patterns in the $i$-th column is equal to $\mu_i$.
\end{lemma}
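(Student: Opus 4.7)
The plan is to identify a single edge-spin feature shared by the three configurations $a_2$, $b_2$, $c_1$ and then evaluate its column-$i$ count by a conservation argument. Reading off the table of six admissible $\Gamma$-configurations, these three are precisely the patterns in which one specific horizontal edge — the eastern one — carries spin ``$-$''; equivalently, they are the three configurations whose Boltzmann weight contains the spectral parameter $z_i$. Hence the count in column $i$ equals the number of rows $r$ in which the horizontal edge immediately east of the $(r,i)$-vertex is labeled ``$-$''.

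To evaluate that count I interpret each ``$-$'' spin as a particle and track its propagation through the lattice. The boundary data place all ``$+$'' on the left and bottom, all ``$-$'' on the right, and ``$-$'' on the top precisely at columns $c_k := (\lambda+\rho)_k$; so exactly $n$ particles enter from the top and must all exit through the right boundary. A short case-check of the six admissible patterns shows that at each vertex any ``$-$'' on the north or west edge continues as a ``$-$'' on the south or east edge, and conversely; consequently each particle propagates monotonically southeastward until it leaves the lattice on the right.

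Granted this monotone propagation, the particle entering at the top of column $c_k$ crosses the eastern edge of the columns $c_k, c_k-1, \ldots, 1$ exactly once each, in distinct rows. Summing over $k$, the eastern edge of column $i$ is labeled ``$-$'' in $|\{k : (\lambda+\rho)_k \geq i\}| = (\lambda+\rho)'_i = \mu_i$ rows, which is the desired count. I do not expect any real obstacle here: the only substantive verification is the tiny case-check that establishes monotone propagation, after which the conclusion is a matter of particle accounting.
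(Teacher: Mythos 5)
Your counting argument is sound, and it is in substance the same as the paper's: the paper determines the number of $-$ spins crossing a vertical cut from the boundary data on the other three sides of the resulting rectangle, which is exactly the conservation law you phrase as monotone south--east propagation of particles. The propagation step (at every admissible vertex the number of $-$ spins on the north and west edges equals the number on the south and east edges) is correct, and granting it, your accounting does show that the horizontal edge immediately east of the column-$i$ vertex carries a $-$ in exactly $\left|\{k : (\lambda+\rho)_k \geq i\}\right| = \mu_i$ rows.

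The gap is in the identification step. The two descriptions you give of the set $\{a_2,b_2,c_1\}$ --- ``the eastern edge carries $-$'' and ``the Boltzmann weight contains $z_i$'' --- are not equivalent: they differ precisely on the two $c$-type vertices, and the difference matters because the east-edge count is $\mu_i$ while the west-edge count is $\mu_{i+1}$. The configurations whose weights contain $z_i$ are indeed $a_2,b_2,c_1$ (weights $z_i-t\alpha_j$, $z_i+\alpha_j$, $z_i(t+1)$). But Lemma~\ref{bce} forces $c_1$ to carry a $-$ on its \emph{southern} edge, and the $t=-1$ computation in the proof of Theorem~\ref{icerepn} shows that the weight-one vertex at the discarded column $\mu_k$ has spins $(-,+,+,-)$ on its north, south, west, east edges and hence is $c_2$; so $c_1$ has its minus spins on the west and south, and $\{a_2,b_2,c_1\}$ is the set of vertices whose \emph{western} edge is $-$. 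Applied to that set, your particle count gives $\mu_{i+1}$ (only the particles entering in columns $\geq i+1$ cross the cut between columns $i+1$ and $i$), whereas the count $\mu_i$ you derive belongs to the east-$-$ set $\{a_2,b_2,c_2\}$. As written, then, you compute the right number for a different set of configurations than the one named in the statement. You need to pin the spin conventions down once and for all before asserting the equivalence; doing so you will find that the same off-by-one tension is already present between the lemma's statement and the paper's own one-line proof, which counts the minus signs ``between the strings labelled $i$ and $i+1$,'' i.e.\ precisely the western edges of column $i$.
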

\begin{proof}
This number is equal to the number of minus spins located on a horizontal
string between the strings labeled $i$ and $i+1$.  This count is known since
for any rectangle, knowing the boundary conditions on the top, bottom and
rightmost sides determines the number of such spins along the leftmost edge.
\end{proof}

As a consequence of Lemma \ref{bce}, the result of taking the limit as
$t\rightarrow\infty$ can be interpreted as taking the leading degree term in
$t$ for each of the Boltzmann weights $v_\Gamma (i,j,t)$ as in Section
\ref{toksec}. Thus with the set of Boltzmann weights $(1, -\alpha_j,
1,z_i+\alpha_j, z_i, 1)$ and corresponding partition function
$Z(\mathfrak{S}^\Gamma_{\lambda,\infty})$, we have
$$
Z(\mathfrak{S}^\Gamma_{\lambda,\infty})=z^\delta s_\lambda(z|\alpha).
$$

By Lemma \ref{bde}, if we consider our ice model with the series of Boltzmann
weights in the following diagram, then the corresponding partition function is
given by dividing $Z(\mathfrak{S}^\Gamma_{\lambda,\infty})$ by
$(-\alpha)^{\mu}$.

 \[ \begin{array}{|c|c|c|c|c|c|c|}
       \hline
       \begin{array}{c}
         \tmop{Gamma}\\
         \text{Ice}
       \end{array} & \includegraphics{gamma1a.mps} &
       \includegraphics{gamma6a.mps} & \includegraphics{gamma4a.mps} &
       \includegraphics{gamma5a.mps} & \includegraphics{gamma2a.mps} &
       \includegraphics{gamma3a.mps}\\
       \hline
       \text{\begin{tabular}{c}
         Boltzmann\\
         weight 
       \end{tabular}} & 1 & 1 & 1 & -z_i / \alpha_{j}-1 & -z_i/\alpha_j & 1\\
       \hline
     \end{array} \]
Let us denote the partition function for this set of Boltzmann weights by
$Z(\mathfrak{S}^{\Gamma'}_{\lambda,\infty}(z|\alpha))$. Thus we obtain the
result of \cite[Theorem 1.1]{McNamara},
\begin{equation}\label{infinity}
Z(\mathfrak{S}^{\Gamma'}_{\lambda,\infty} (z|\alpha))=
\frac{\tmmathbf{z}^\delta}{(-\tmmathbf{\alpha})^{(\lambda+\delta)'}}s_\lambda(z|\alpha).
\end{equation}

We now pause to introduce the notions of double Schubert polynomials and Grassmannian permutations
 so that we can make the connection to \cite[Theorem 1]{LascouxSchubert} precise.


A permutation is {\tmem{Grassmannian}} if it has a unique (right) descent.

Let $n$ and $m$ be given and let $\lambda = (\lambda_1, \cdots, \lambda_n)$ be
a partition such that $\lambda_1 \leqslant m$. Then there is associated with
$\lambda$ a Grassmannian permutation $w_{\lambda} \in S_{n + m}$. This is the permutation such that
\[ w_{\lambda} (i) = \left\{ \begin{array}{ll}
     \lambda_{n + 1 - i} + i & \text{if $i \leqslant n$,}\\
     i - \lambda_{i - n}' & \text{if $i > n$,}
   \end{array} \right. \]
where $\lambda'$ is the conjugate partition. This has $w_{\lambda} (n + 1) <
w_{\lambda} (n)$ and no other descent.

Let $x_1, \cdots, x_{n + m}$ and $y_1, \cdots, y_{n + m}$ be parameters. We
define the divided difference operators as follows. If $1 \leqslant i < n + m$
and $f$ is a function of the $x_i$ let
\[ \partial_i f (x_1, \cdots, x_{n + m}) = \frac{f - s_i f}{x_i - x_{i + 1}}
\]
where $s_i f$ is the function obtained by interchanging $x_i$ and $x_{i + 1}$.
Then if $w \in S_{n + m}$, let $w = s_{i_1} \cdots s_{i_k}$ be a reduced
expression of $w$ as a product of simple reflections. Then let $\partial_w =
\partial_{i_1} \cdots \partial_{i_k}$. This is well-defined since the divided
difference operators $\partial_i$ satisfy the braid relations. Let $w_0$ be
the long element of $S_{n + m}$. Then the \textit{double Schubert
  polynomials},
which were defined by Lascoux and Sch\"utzenberger~\cite{LascouxSchutz} are
given by
\[ \mathfrak{S}_w (x, y) = \partial_{w^{- 1} w_0} \left( \prod_{i + j
   \leqslant n + m} (x_i - y_j) \right) . \]

The theory of factorial Schur functions is a special case of the
theory of double Schubert polynomials developed by Lascoux and
Sch\"utzenberger \cite{LascouxSchutz, LascouxSchutz1}. Although the comparison
is well-known, there does not appear to be a truly satisfactory
reference in the literature. We give a new proof in the thematic
spirit of this paper.

\begin{theorem}
The factorial Schur functions are equal to double Schubert polynomials for
Grassmannian permutations.
More precisely,
\[\mathfrak{S}_{w_\lambda}(x,y)=s_\lambda(x|-y)\]
\end{theorem}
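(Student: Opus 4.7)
The plan is to combine equation (\ref{infinity}) with Lascoux's six-vertex representation of double Schubert polynomials for Grassmannian permutations \cite[Theorem 1]{LascouxSchubert}. Substituting $z_i = x_i$ and $\alpha_j = -y_j$ into (\ref{infinity}), and noting that $(-\alpha)^{(\lambda+\rho)'} = y^{(\lambda+\rho)'}$ under this substitution, yields
\[ s_\lambda(x|-y) = \frac{y^{(\lambda+\rho)'}}{x^\rho}\, Z\bigl(\mathfrak{S}^{\Gamma'}_{\lambda,\infty}(x|-y)\bigr). \]
It therefore suffices to identify the right-hand side with the double Schubert polynomial $\mathfrak{S}_{w_\lambda}(x,y)$.

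To carry this out, I would first track how the Boltzmann weights transform under the substitution. The weights $(1,1,1,-z_i/\alpha_j-1,-z_i/\alpha_j,1)$ of $\mathfrak{S}^{\Gamma'}_{\lambda,\infty}$ become $(1,1,1,x_i/y_j-1,x_i/y_j,1)$, and by Lemma~\ref{bde} the column factor $y_j^{\mu_j}$ can be distributed among the $\mu_j$ vertices of type $a_2$, $b_2$, or $c_1$ in column $j$, yielding rescaled weights $(1,y_j,1,x_i-y_j,x_i,1)$ and accounting for the factor $y^{(\lambda+\rho)'}$. A similar row-wise redistribution using Lemma~\ref{bce} absorbs the denominator $x^\rho$. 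Second, I would match the boundary conditions: the $-$ signs on the top edge sit in the columns indexed by $\{\lambda_j + n - j + 1 : 1\le j \le n\}$, which is precisely $\{w_\lambda(i) : 1 \le i \le n\}$ by the definition of the Grassmannian permutation $w_\lambda$, while the all-$+$ bottom, all-$+$ left, all-$-$ right walls correspond to the standard Grassmannian boundary data in Lascoux's model. This matching, together with \cite[Theorem 1]{LascouxSchubert}, then delivers the equality.

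The main obstacle is purely conventional bookkeeping: Lascoux's lattice orientation, indexing of variables, and precise form of the Boltzmann weights differ superficially from those of Section~\ref{ybesection}, so the identification of the two partition functions requires careful alignment of signs and orderings. Should this matching prove cumbersome, an alternative self-contained route uses the divided-difference characterization of $\mathfrak{S}_{w_\lambda}$: since $s_\lambda(x|-y)$ is symmetric in $x_1,\ldots,x_n$ by Theorem~\ref{theoremone}, it is annihilated by $\partial_i$ for $i<n$, matching the unique descent of $w_\lambda$ at position $n$. Combined with the vanishing Theorem~\ref{theoremthree}, which gives the explicit value of $s_\lambda(-\alpha_\lambda|\alpha)$ and vanishing on $-\alpha_\mu$ whenever $\lambda\not\subset \mu$, one obtains a unique characterization of $s_\lambda(x|-y)$ within the space of polynomials of the correct degree; verifying that $\mathfrak{S}_{w_\lambda}(x,y)$ satisfies the same interpolation and symmetry properties then concludes the proof.
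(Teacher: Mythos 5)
Your primary argument --- substituting $z=x$, $\alpha=-y$ into (\ref{infinity}), rescaling the Boltzmann weights column by column via Lemma~\ref{bde}, and matching boundary data and conventions with \cite[Theorem 1]{LascouxSchubert} --- is exactly the comparison the paper itself makes, carried out at essentially the same level of detail. The proposal is correct and takes the same route as the paper; the alternative interpolation/divided-difference characterization you sketch at the end is a reasonable backup but is not needed.
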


\medbreak
\begin{proof}
The proof is a comparison of (\ref{infinity}) with \cite[Theorem 1]{LascouxSchubert}.
To translate the left-hand side in (\ref{infinity}) into the staircase
language of Lascoux we use the bijection in Proposition \ref{gtp}. 
In Theorem~1 of \cite{LascouxSchubert} if Lascoux' $x$ is our $z$
and his $y$ is our $-\alpha$, then the left-hand side of his identity
is exactly the partition function on the left-hand side of our (\ref{infinity}).
The monomial $x^{\rho_r}y^{-\left<\tilde{u}\right>}$
on the right-hand side of his identity equals the monomial
$z^\rho(-\alpha)^{-(\lambda+\delta)'}$
on the right-hand side of (\ref{infinity}). Lascoux'
$\mathbb{X}_{\tilde{u},u^\omega}$
is the double Schubert polynomial $\mathfrak{S}_{w_\lambda}$. The
statement follows.
\end{proof}

To conclude this section, we shall use this description to give a proof of the
characteristic vanishing property of Schur functions. In lieu of
(\ref{infinity}) above, the following result is clearly equivalent to Theorem
\ref{theoremthree}.

\begin{theorem}
For two partitions $\lambda$ and $\mu$ of at most $n$ parts, we have
$$Z(\mathfrak{S}^{\Gamma'}_{\lambda,\infty}(-\alpha_\mu|\alpha))=0 \text{ unless } \lambda\subset\mu ,$$
$$Z(\mathfrak{S}^{\Gamma'}_{\lambda,\infty}(-\alpha_\lambda|\alpha))=\prod_{(i,j)\in\lambda} \left(\frac{\alpha_{n+1-i+\lambda_i}}{\alpha_{n-\lambda'_j+j}}-1\right).$$
\end{theorem}
\begin{proof}
Fix a state, and assume that this state gives a non-zero contribution to the
partition function $Z(\mathfrak{S}^{\Gamma'}_{\lambda,\infty}(-\alpha_\mu|\alpha))$. Under the bijection between states of
square ice and strict Gelfand-Tsetlin patterns, let $k_i$ be the leftmost
entry in the $i$-th row of the corresponding Gelfand-Tsetlin pattern. We shall
prove by descending induction on $i$ the inequality
$$
n+1-i+\mu_i\geq k_i.
$$
For any $j$ such that $k_{i+1}<j<k_i$, there is a factor $(x_i/\alpha_j-1)$ in the
Boltzmann weight of this state. We have the inequality
$n+1-i+\mu_i>n+1-(i+1)+\mu_{i+1}\geq k_{i+1}$ by our inductive
hypothesis. Since $(\alpha_\mu)_i=\alpha_{n+1-i+\mu_i}$, in order for this state to give
a non-zero contribution to $Z(\mathfrak{S}^{\Gamma'}_{\lambda,\infty}(-\alpha_\mu|\alpha))$, we must have that
$n+1-i+\mu_i\geq k_i$, as required.

Note that for all $i$, we have $k_i\geq n+1-i+\lambda_i$. Hence $\mu_i\geq\lambda_i$ for all $i$, showing that $\mu\supset\lambda$ as required, proving the first part of the theorem.

To compute $Z(\mathfrak{S}^{\Gamma'}_{\lambda,\infty}(-\alpha_\lambda|\alpha))$, notice that the above argument shows that there is only one state which gives a non-zero contribution to the sum. Under the bijection with Gelfand-Tsetlin patterns, this is the state with 
$p_{i,j}=p_{1,j}$ for all $i,j$. The formula for $Z(\mathfrak{S}^{\Gamma'}_{\lambda,\infty}(-\alpha_\lambda|\alpha))$ is now immediate.
\end{proof}

\section{Asymptotic Symmetry}
Macdonald~{\cite{MacdonaldVariations}} shows that the factorial Schur
functions are {\tmem{asymptotically symmetric}} in the $\alpha_i$ as the
number of parameters $z_i$ increases. To formulate this property,
let $\sigma$ be a permutation of the parameters $\alpha = (\alpha_1, \alpha_2,
\cdots)$ such that $\sigma(\alpha_j)=\alpha_j$ for all but finitely many $j$.
Then we will show that if the number of parameters $z_i$ is
sufficiently large, then $s_{\lambda} (z| \sigma \alpha) = s_{\lambda} (z|
\alpha)$. How large $n$ must be depend on both $\lambda$ and on the
permutation $\sigma$.

We will give a proof of this symmetry property of factorial Schur functions
using the Yang-Baxter equation. In the following theorem we will use the
following Boltzmann weights:
\[ \begin{array}{|c|c|c|c|c|c|c|}
     \hline
     v & \includegraphics{weighta1.mps} & \includegraphics{weighta2.mps} &
     \includegraphics{weightb1.mps} & \includegraphics{weightb2.mps} &
     \includegraphics{weightc1.mps} & \includegraphics{weightc2.mps}\\
     \hline
     & 1 & z_i - t \alpha & t & z_i + \alpha & z_i (t + 1) & 1\\
     \hline
     w & \includegraphics{weighta1.mps} & \includegraphics{weighta2.mps} &
     \includegraphics{weightb1.mps} & \includegraphics{weightb2.mps} &
     \includegraphics{weightc1.mps} & \includegraphics{weightc2.mps}\\
     \hline
     & 1 & z_i - t \beta & t & z_i + \beta & z_i (t + 1) & 1\\
     \hline
     u & \includegraphics{rota2.mps} & \includegraphics{rota1.mps} &
     \includegraphics{rotb1.mps} & \includegraphics{rotb2.mps} &
     \includegraphics{rotc1b.mps} & \includegraphics{rotc2b.mps}\\
     \hline
     & 1 & 1 & \alpha - \beta & 0 & 1 & 1\\
     \hline
   \end{array} \]
\begin{theorem}
  \label{vertybe}With the above Boltzmann weights, and with $\epsilon_1,
  \cdots, \epsilon_6$ fixed spins $\pm$, the following two systems have the
  same partition function.
  \[ \includegraphics{ybrrot.mps} \hspace{2em}
     \includegraphics{yblrot.mps} \]
\end{theorem}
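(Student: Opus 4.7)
The plan is to prove Theorem \ref{vertybe} by a direct verification of the Yang-Baxter equation on each boundary spin configuration, after first confirming that we are in the free-fermionic regime. First, I would check the free-fermionic condition for the $u$-vertex:
\[
a_1(u) a_2(u) + b_1(u) b_2(u) - c_1(u) c_2(u) = 1 \cdot 1 + (\beta-\alpha) \cdot 0 - 1 \cdot 1 = 0,
\]
together with the already-verified free-fermionic conditions on $v$ and $w$ from Section \ref{ybesection}. This situates us in the framework where such YBEs are expected to hold.

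Second, I would highlight that Theorem \ref{vertybe} is not an immediate corollary of Theorem \ref{ybe}. Plugging the given $v, w$ into the formulas of Theorem \ref{ybe} produces an $R$-matrix with $b_1 = t^2(\alpha - \beta)$, not the required $\beta - \alpha$ (these agree only when $t^2 = -1$). So Theorem \ref{ybe} is a ``row-swap'' YBE exchanging two spectral parameters $z_i, z_k$, whereas Theorem \ref{vertybe} is a distinct ``column-swap'' YBE exchanging column parameters $\alpha, \beta$ between two columns. Both hold in the free-fermionic six-vertex model, but they are independent identities, and I expect Theorem \ref{vertybe} must be established on its own terms.

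Third, I would carry out the verification case by case on the boundary spins $(\epsilon_1, \ldots, \epsilon_6) \in \{\pm\}^6$. For each fixed choice, one enumerates the admissible assignments of the three internal edge spins on the left and right diagrams, computes the product of the three Boltzmann weights at $u, v, w$, sums these products, and compares. The main obstacle is the bookkeeping, but it is substantially reduced by the following observations: (i) the vanishing $b_2(u) = 0$ forbids the $b_2$ configuration at the $u$-vertex and kills many potential states; (ii) a large fraction of the $64$ boundary choices admit no admissible internal state on either side and trivially contribute $0 = 0$; and (iii) the symmetries of the diagram, namely global spin reversal combined with the $\alpha \leftrightarrow \beta$ exchange between the two sides, cut the number of essentially distinct cases by a factor of four. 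Once the remaining finite list of nontrivial cases has been checked, the identity of partition functions follows.
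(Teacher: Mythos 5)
Your overall strategy---a direct finite verification over all boundary spin assignments---is legitimate in principle and genuinely different from the paper's route. The paper deduces Theorem \ref{vertybe} from the general free-fermionic Yang--Baxter equation (Theorem 3 of \cite{hkice}) by rotating the diagram $90^{\circ}$ and changing the signs of the horizontal edges, after which the R-matrix of that theorem, applied to the \emph{relabelled} weights, is a constant multiple of the stated $u$. A brute-force check buys self-containedness at the cost of bookkeeping; the reduction buys brevity at the cost of leaving the rotation/sign-change dictionary to the reader. Your second observation is therefore only half right: you correctly note that substituting the given $v,w$ directly into the formulas of Theorem \ref{ybe} does not produce the stated $u$ (indeed not only $b_1(u)$ but also $a_1(u)=(1+t)z_i+t(\alpha-\beta)$ comes out wrong), but the conclusion that the two identities are independent and that Theorem \ref{vertybe} ``must be established on its own terms'' is too strong---the rotated, sign-flipped configuration \emph{is} an instance of the hkice theorem, which is exactly how the paper proves it.

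Two concrete caveats about your verification plan. First, global spin reversal is \emph{not} a symmetry of these Boltzmann weights (e.g.\ $a_1(v)=1$ while $a_2(v)=z_i-t\alpha$, and $b_2(u)=0$ while $b_1(u)=\beta-\alpha$), so the claimed factor-of-four reduction in the number of cases is unjustified; relying on it risks silently omitting cases. The safe reductions are the line-conservation law (each admissible vertex preserves the number of $-$ spins passing through, so most of the $64$ boundary choices contribute $0=0$ on both sides) and the vanishing $b_2(u)=0$, both of which you already note. Second, the case check is only described, not carried out, so as written the argument is complete only modulo that finite computation; for this particular theorem the computation is the entire content, so it needs to be done (or reduced to \cite{hkice} as the paper does) rather than sketched.
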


\begin{proof}
  This may be deduced from Theorem 3 of {\cite{hkice}} with a little work. By
  rotating the diagram $90^{\circ}$ and changing the signs of the horizontal
  edge spins, this becomes a case of that result. The R-matrix is not the matrix
  for $u$ given above, but a constant multiple. We leave the details to the
  reader.
\end{proof}

Let $\sigma_i$ be the map on sequences $\alpha = (\alpha_1, \alpha_2, \cdots)$
that interchanges $\alpha_i$ and $\alpha_{i + 1}$. Let $\lambda$ be a
partition. We will show that sometimes:
\begin{equation}
  \label{fsfsym} s_{\lambda} (z|\alpha) = s_{\lambda} (z| \sigma_i \alpha)
\end{equation}
and sometimes
\begin{equation}
  \label{schurdecomp} s_{\lambda} (z|\alpha) = s_{\lambda} (z| \sigma_i \alpha) +
  s_{\mu} (z| \sigma_i \alpha) (\alpha_{i} - \alpha_{i+1}),
\end{equation}
where $\mu$ is another partition. The
next Proposition gives a precise statement distinguishing between the two
cases (\ref{fsfsym}) and (\ref{schurdecomp}).

\begin{proposition}
  (i) Suppose that $i + 1 \in \lambda + \rho$ but that $i \notin \lambda +
  \rho$. Let $\mu$ be the partition characterized by the condition that $\mu +
  \rho$ is obtained from $\lambda + \rho$ by replacing the unique entry equal
  to $i + 1$ by $i$. Then (\ref{schurdecomp}) holds with this $\mu$.
  
  (ii) If either $i + 1 \notin \lambda + \rho$ or $i \in \lambda + \rho$ then
  (\ref{fsfsym}) holds.
\end{proposition}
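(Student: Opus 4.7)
The plan is to apply Theorem~\ref{vertybe} with $\alpha=\alpha_i$ and $\beta=\alpha_{i+1}$ in a train argument on columns $i$ and $i+1$ of $\mathfrak{S}^{\Gamma}_{\lambda,t}$, using the $u$-vertex to swap them, and then to read off both parts of the proposition from the resulting identity. First I would attach a $u$-vertex just below the bottom boundary, between columns $i$ and $i+1$. Since the bottom boundary spins are both $+$, conservation of the plus-count at the $u$-vertex forces its south edges to be $(+,+)$ as well (the rota1 configuration, of weight $1$), so the augmented system still has partition function $Z(\mathfrak{S}^{\Gamma}_{\lambda,t})$. Next I would invoke Theorem~\ref{vertybe} once per row to commute the $u$-vertex upward past rows $n,n-1,\ldots,1$ in turn; each application swaps the shift parameters $\alpha_i,\alpha_{i+1}$ at that row. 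After $n$ applications the $u$-vertex sits above the lattice, the underlying lattice has shifts $\sigma_i\alpha$, the south edges of $u$ attach to the top of this swapped lattice, and the north edges of $u$ are identified with the prescribed top boundary $(s_i,s_{i+1})$ at columns $i,i+1$, where $s_j=-$ if $j\in\lambda+\rho$ and $s_j=+$ otherwise.

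The resulting partition function then equals a sum over admissible south spins $(a,b)$ of the $u$-vertex, each contribution weighted by the corresponding $u$-weight $w((s_i,s_{i+1}),(a,b))$ times $Z(\mathfrak{S}^{\Gamma}_{\nu,t}(\sigma_i\alpha))$, where $\nu$ is the partition whose top $-$-positions are obtained from $\lambda+\rho$ by replacing the columns $i,i+1$ entries with $(a,b)$. When $s_i=s_{i+1}$, conservation admits only $(a,b)=(s_i,s_{i+1})$ with weight $1$, yielding $Z(\mathfrak{S}^{\Gamma}_{\lambda,t}(\sigma_i\alpha))$. When $(s_i,s_{i+1})=(-,+)$, the two admissible options are the c-type $(-,+)$ (weight $1$, $\nu=\lambda$) and the rotb2-type $(+,-)$ (weight $0$), so only $Z(\mathfrak{S}^{\Gamma}_{\lambda,t}(\sigma_i\alpha))$ contributes. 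When $(s_i,s_{i+1})=(+,-)$, both the c-type $(+,-)$ (weight $1$, $\nu=\lambda$) and the rotb1-type $(-,+)$ (weight $\beta-\alpha=\alpha_{i+1}-\alpha_i$, $\nu=\mu$ with $\mu+\rho=(\lambda+\rho)\setminus\{i+1\}\cup\{i\}$) contribute, giving $Z(\mathfrak{S}^{\Gamma}_{\lambda,t}(\sigma_i\alpha))+(\alpha_{i+1}-\alpha_i)\,Z(\mathfrak{S}^{\Gamma}_{\mu,t}(\sigma_i\alpha))$.

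Equating each of these expressions to the initial value $Z(\mathfrak{S}^{\Gamma}_{\lambda,t}(\alpha))$ and dividing by the $\alpha$-independent common factor $\prod_{i<j}(tz_j+z_i)$ provided by Theorem~\ref{icerepn} yields (\ref{fsfsym}) in case (ii) and (\ref{schurdecomp}) in case (i). The main obstacle is identifying precisely which of rotb1, rotb2 carries the weight $\beta-\alpha$ and which carries $0$, since this determines both the sign of the coefficient $(\alpha_{i+1}-\alpha_i)$ in (\ref{schurdecomp}) and whether the $(-,+)$ subcase of (ii) is genuinely symmetric (rather than acquiring a spurious $\mu$-term); the verification is a direct check against the Boltzmann-weight table of Theorem~\ref{vertybe}, but it is the one step with genuine content beyond bookkeeping the iterated Yang-Baxter moves.
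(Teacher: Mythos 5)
Your proposal is correct and follows essentially the same route as the paper: attach the rotated $R$-vertex $u$ below columns $i$ and $i+1$ (where it is forced into the weight-one configuration), push it to the top of the lattice by repeated application of Theorem~\ref{vertybe}, and split the partition function according to the admissible configurations of $u$ against the fixed top boundary spins, finally dividing by the $\alpha$-independent factor $\prod_{i<j}(tz_j+z_i)$ from Theorem~\ref{icerepn}. Your case analysis is slightly more explicit than the paper's (which only distinguishes the two cases of the statement), but the content is identical.
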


\medbreak\noindent
For example, suppose that $\lambda = (3, 1)$ and $n = 5$. Then $\lambda + \rho
= (8, 5, 3, 2, 1)$. If $i = 4$, then $i + 1 = 5 \in \lambda + \rho$ but $i
\notin \lambda + \rho$, so (\ref{schurdecomp}) holds with $\mu + \rho = (8, 4,
3, 2, 1)$, and so $\mu = (3)$.

\medbreak
\begin{proof}
  We will use Theorem~\ref{vertybe} with $\alpha = \alpha_i$ and $\beta =
  \alpha_{i + 1}$. The parameter $t$ may be arbitrary for the following
  argument. We first take $i = n$ and attach the vertex $u$ below the $i$ and
  $i + 1$ columns, arriving at a configuration like this one in the case $n =
  3,$ $\lambda = (4, 3, 1)$.
  \[ \includegraphics{lattice4.mps} \]
  There is only one legal configuration for the spins of the edges between $u$
  and the two edges above it, which in the example connect with $(3, 5)$ and
  $(3, 4)$: these must both be $+$. The Boltzmann weight at $u$ in this
  configuration is unchanged, and the partition function of this system equals
  that of $\mathfrak{S}^{\Gamma}_{\lambda, t}$. After applying the Yang-Baxter
  equation, we arrive at a configuration with the $u$ vertex above the top
  row, as follows:
  \[ \includegraphics{lattice5.mps} \]
  Now if we are in case (i), the spins of the two edges are $-$, $+$ then
  there are two legal configurations for the vertex, and separating the
  contribution these we obtain
  \[ Z (\mathfrak{S}_{\lambda, t}) = \left( \sigma_i Z (\mathfrak{S}_{\lambda,
     t}) \right) + (\alpha-\beta) \left( \sigma_i Z (\mathfrak{S}_{\mu, t})
     \right) . \]
  If we are in case (ii), there is only one legal configuration, so $Z
  (\mathfrak{S}_{\lambda, t}) = \left( \sigma_i Z (\mathfrak{S}_{\lambda, t})
  \right) .$
\end{proof}

\begin{corollary}
  Let $\lambda$ be a partition, and let $l$ be the length of $\lambda$. If $n
  \geqslant l + i$ then $s_{\lambda} (z|a) = s_{\lambda} (z| \sigma_i a)$.
\end{corollary}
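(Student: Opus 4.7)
The plan is to deduce this corollary directly from case (ii) of the preceding Proposition, by understanding which integers lie in $\lambda + \rho$ when $\lambda$ has length $l$.

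First I would unpack the structure of $\lambda + \rho$. Since $\rho = (n, n-1, \ldots, 1)$, we have $(\lambda + \rho)_j = \lambda_j + n - j + 1$. Because $\lambda$ has length $l$, the entries with $j > l$ satisfy $\lambda_j = 0$, so for $j = l+1, l+2, \ldots, n$ we get $(\lambda+\rho)_j = n - j + 1$, which runs through the values $n-l, n-l-1, \ldots, 2, 1$. Thus the set $\{1, 2, \ldots, n-l\}$ is contained in $\lambda + \rho$. (For completeness, one checks the remaining entries satisfy $(\lambda+\rho)_j \geq n - l + 2$ for $j \leq l$, so $n-l+1 \notin \lambda+\rho$ in general; but that is not needed here.)

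Next, the hypothesis $n \geq l + i$ is equivalent to $i \leq n - l$. Combined with the observation above, this immediately yields $i \in \lambda + \rho$, which is exactly the second alternative in case (ii) of the Proposition. Applying the Proposition then gives $s_\lambda(z|\alpha) = s_\lambda(z|\sigma_i \alpha)$, as desired.

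There is essentially no obstacle here; the whole content of the Corollary is the elementary combinatorial observation that $\{1, \ldots, n-l\} \subset \lambda + \rho$, together with the translation of the bound $n \geq l+i$ into $i \leq n-l$. All the real work has already been done in establishing case (ii) of the Proposition via the vertical Yang-Baxter equation in Theorem~\ref{vertybe}.
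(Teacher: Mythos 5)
Your proof is correct and is essentially the paper's own argument: the paper observes that the top boundary edge in column $j$ carries a $-$ sign for $j \leqslant n-l$, which is precisely your statement that $\{1,\ldots,n-l\}\subset\lambda+\rho$, so the hypothesis $n\geqslant l+i$ gives $i\in\lambda+\rho$ and case (ii) of the Proposition applies. (Note that the paper's proof concludes ``therefore we are in case (i)'', which is evidently a typo for case (ii); your reading is the correct one.)
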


\begin{proof}
  For if $l$ is the length of the partition $\lambda$, then the top edge spin in
  column $j$ is $-$ when $j \leqslant n - l$, and therefore we are in
  case~(i).
\end{proof}

This Corollary implies Macdonald's observation that the factorial Schur
functions are asymptotically symmetric in the $\alpha_i$.

\section{The Dual Cauchy Identity}

Let $m$ and $n$ be positive integers. For a partition $\lambda=(\lambda_1,\ldots,\lambda_n)$ with $\lambda_1\leq m$, we define a new partition $\hat\lambda=(\hat\lambda_1,\ldots,\hat\lambda_m)$ by
\[
 \hat\lambda_i=|\{j\mid \lambda_j \leq m-i\}|.
\]
After a reflection, it is possible to fit the Young diagrams of $\lambda$ and $\hat\lambda$ into a rectangle.

We shall prove the following identity, known as the dual Cauchy identity. Another
proof may be found in Macdonald~\cite{MacdonaldVariations} (6.17). In view of
the relationship between factorial Schur functions and Schubert polynomials,
this is equivalent to a statement on page~161 of Lascoux~\cite{LascouxBook}.
See also Corollary 2.4.8 of Manivel~\cite{Manivel} for another version of
the Cauchy identity for Schubert polynomials.

\begin{theorem}[Dual Cauchy Identity]
 For two finite alphabets of variables $x=(x_1,\ldots,x_n)$ and $y=(y_1,\ldots,y_m)$, we have
\[
 \prod_{i=1}^n \prod_{j=1}^m (x_i+y_j) = \sum_\lambda s_\lambda(x|\alpha) s_{\hat\lambda}(y|-\alpha).
\]
The sum is over all partitions $\lambda$ with at most $n$ parts and with $\lambda_1\leq m$.
\end{theorem}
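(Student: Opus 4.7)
The plan is to exhibit both sides of the identity as the partition function of a single six-vertex system, evaluated in two different ways. Construct a combined $\Gamma$-ice system $\mathfrak{S}$ on a lattice with $n+m$ columns whose top $n$ rows carry spectral parameters $x_1,\ldots,x_n$ with column shifts $\alpha_j$, and whose bottom $m$ rows carry spectral parameters $y_1,\ldots,y_m$ with column shifts $-\alpha_j$. The boundary conditions are chosen (left all $+$, right all $-$, top all $-$, bottom all $+$, adjusted for convention) so that the admissible states are parametrized by partitions $\la \subset (m^n)$: the $-$-signs on the horizontal edges between the two blocks occupy a size-$n$ subset of $\{1,\ldots,n+m\}$ which is $\la+\rho$, and its complement is exactly $\lahat+\rho'$ for $\rho'=(m,m-1,\ldots,1)$, as one verifies directly from the definition of $\lahat$.

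For the first evaluation, condition on the intermediate configuration. For each $\la$, the top half is exactly $\mathfrak{S}^{\Gamma}_{\la,t}$ with data $(x,\alpha)$, while the bottom half, after a vertical reflection turning the complementary pattern $\lahat+\rho'$ into its top boundary, is exactly $\mathfrak{S}^{\Gamma}_{\lahat,t}$ with data $(y,-\alpha)$. Applying Theorem~\ref{icerepn} to each piece gives
\[
Z(\mathfrak{S}) = \left[\prod_{1 \le i < j \le n}(tx_j + x_i)\right]\left[\prod_{1 \le i < j \le m}(ty_j + y_i)\right] \sum_{\la \subset (m^n)} s_{\la}(x|\alpha)\, s_{\lahat}(y|-\alpha).
\]

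For the second evaluation, I would use a Yang-Baxter argument to braid the $x$-rows past the $y$-rows, inserting an R-matrix analogous to the $v_{\Gamma\Gamma}$ of Theorem~\ref{ybe} but adapted to rows of opposite shift convention, in the spirit of the R-matrix appearing in Theorem~\ref{vertybe}. The crucial point is that each crossing of an $x_i$-row with a $y_j$-row contributes a factor $x_i+y_j$, because the opposite sign choice $+\alpha$ vs.\ $-\alpha$ in the two halves makes the shift parameters cancel on the crossing vertex. With the given boundary conditions, after all braidings the interior freezes to a unique admissible state whose total Boltzmann weight is $\prod_{i<j}(tx_j+x_i)\prod_{i<j}(ty_j+y_i)\prod_{i=1}^n\prod_{j=1}^m (x_i+y_j)$. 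Equating the two evaluations and cancelling the common Weyl-denominator factors yields the dual Cauchy identity. The main obstacle is this second evaluation: identifying the correct R-matrix for braiding rows of opposite shift convention and checking that after braiding the configuration really does freeze to a state of the claimed weight. As in the proof of Theorem~\ref{icerepn}, it is convenient to specialize $t$ to a simple value (for instance $t=0$, where several Boltzmann weights vanish and many configurations are automatically excluded) to make the freezing structure transparent, and then argue using degree considerations in $t$ to conclude in general.
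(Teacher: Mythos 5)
Your overall strategy --- computing the partition function of one combined $(m+n)$-row system in two ways and splitting the states according to the spins on the horizontal edges between the two blocks --- is the paper's strategy, and your sum-side evaluation is essentially the paper's argument. But your product-side evaluation has a genuine gap, and it is caused by your decision to build the sign change $\alpha\mapsto-\alpha$ into the Boltzmann weights of one block from the outset. To braid an $x_i$-row carrying shifts $+\alpha_j$ past a $y_k$-row carrying shifts $-\alpha_j$ by the train argument, you need a single fused vertex $u$ that works in \emph{every} column; but feeding $v=v_\Gamma$ with $+\alpha_j$ and $w=v_\Gamma$ with $-\alpha_j$ into the recipe of Theorem~\ref{ybe} gives, for instance,
\[
a_1(u)=1\cdot(z_k+t\alpha_j)+(z_i+\alpha_j)t=tz_i+z_k+2t\alpha_j,\qquad
b_2(u)=-(z_k-\alpha_j)+(z_i+\alpha_j)=z_i-z_k+2\alpha_j,
\]
and no choice of $t$ removes the $j$-dependence from all six entries (the $2\alpha_j$ in $b_2(u)$ is there even at $t=0$). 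So there is no column-independent R-matrix for crossing rows of opposite shift convention, the braiding does not telescope, and your second evaluation collapses. Note also that the factor $x_i+y_k$ you want from each crossing would have to come from the fused weights, which involve no $\alpha$ at all, so it cannot arise from a ``cancellation of shifts'' at the crossing vertex.

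The paper's design avoids all of this: it uses the \emph{same} shifts $+\alpha_j$ in all $m+n$ rows, puts $(y_m,\dots,y_1,x_1,\dots,x_n)$ as the spectral parameters, and sets $t=1$. The full square array is then literally $\mathfrak{S}^{\Gamma}_{0,1}$ in $m+n$ variables, so Theorem~\ref{icerepn} with $\lambda=0$ gives the product side $\prod_{i<j}(y_i+y_j)\prod_{i,j}(y_i+x_j)\prod_{i<j}(x_i+x_j)$ with no braiding whatsoever. The $-\alpha$ appears only a posteriori: flipping the spins on vertical edges and reflecting the top block turns its weights $(1,\,y_i-\alpha_j,\,1,\,y_i+\alpha_j,\,2y_i,\,1)$ into $(1,\,y_i+\alpha_j,\,1,\,y_i-\alpha_j,\,2y_i,\,1)$, which is again of $v_\Gamma$ form with $\alpha\mapsto-\alpha$ precisely because $t=1$ forces the $a_1$- and $b_1$-weights both to equal $1$; your proposed specialization $t=0$ would destroy this identification. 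Two further bookkeeping problems in your setup: with your boundary conditions the intermediate edge carries $m$ (not $n$) minus signs, so it is the \emph{lower} block that is in standard $\mathfrak{S}^{\Gamma}$ form while the upper one needs the spin-flip-and-reflect; and a reflection alone does not convert the intermediate pattern into its complement --- you need the spin flip on the vertical edges as well. The fix is to adopt the paper's configuration rather than to repair the braiding.
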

\begin{proof}
The proof will consist of computing the partition function of a particular six-vertex model in two different ways.
We will use the weights $v_\Gamma(i,j,t)$ introduced in Section
\ref{toksec} with $t=1$ and for our
parameters $(z_1,\ldots,z_{m+n})$, we will take the sequence $(y_m,\ldots,y_1,x_1,\ldots,x_n)$.

As for the size of the six-vertex model we shall use and the boundary
conditions, we take a $(m+n)\times (m+n)$ square array, with positive spins on
the left and lower edges, and negative spins on the upper and right edges. By
Theorem \ref{icerepn} (with $\lambda=0$), the partition function of this array
is
\[
 \prod_{i< j}(y_i+y_j) \prod_{i,j}(y_i+x_j) \prod_{i<j}(x_i+x_j).
\]

We shall partition the set of all states according to the set of spins that
occur between the rows with parameters $y_1$ and $x_1$. Such an
arrangement of spins corresponds to a partition $\lambda$ in the usual way,
i.e. the negative spins are in the columns labeled $\lambda_i+n-i+1$. In this
manner we can write our partition function as a sum
\[
 \sum_\lambda Z_\lambda^{\text{top}} Z_\lambda^{\text{bottom}}.
\]

Here $Z_\lambda^{\text{top}}$ is the partition function of the system with $m$
rows, $m+n$ columns, parameters $y_m,\ldots,y_1$ and boundary
conditions of positive spins on the left, negative spins on the top and right,
and the $\lambda$ boundary condition spins on the bottom. And
$Z_\lambda^{\text{bottom}}$ is the partition function of the system with $n$
rows, $m+n$ columns, spectral parameters $x_1,\ldots,x_n$ and the usual
boundary conditions for the partition $\lambda$.

By Theorem \ref{icerepn}, we have 
\[Z_\lambda^{\text{bottom}}=\prod_{i<j}(x_i+x_j) s_\lambda(x|\alpha).\]

It remains to identify $Z_\lambda^{\text{top}}$. To do this, we perform the following operation to the top part of our system. We flip all spins that lie on a vertical strand, and then reflect the system about a horizontal axis. As a consequence, we have changed the six-vertex system that produces the partition function $Z_\lambda^{\text{top}}$ into a system with more familiar boundary conditions, namely with positive spins along the left and bottom, negative spins along the right hand side, and along the top row we have negative spins in columns $\hat\lambda_i+m-i+1$.
The Boltzmann weights for this transformed system are (since $t=1$)
\[ \begin{array}{|c|c|c|c|c|c|c|}
       \hline
       \begin{array}{c}
         \tmop{Gamma}\\
         \text{Ice}
       \end{array} & \includegraphics{gamma1a.mps} &
       \includegraphics{gamma6a.mps} & \includegraphics{gamma4a.mps} &
       \includegraphics{gamma5a.mps} & \includegraphics{gamma2a.mps} &
       \includegraphics{gamma3a.mps}\\
       \hline
       \text{\begin{tabular}{c}
         Boltzmann\\
         weight 
       \end{tabular}} & 1 & y_i+\alpha_j & 1 & y_i-\alpha_j & 2y_i & 1\\
       \hline
     \end{array} \]
Now again we use Theorem \ref{icerepn} to conclude that
\[
Z_\lambda^{\text{top}}=\prod_{i<j}(y_i+y_j) s_{\hat\lambda}(y|-\alpha).
\]
Comparing the two expressions for the partition function completes the proof.\end{proof}

\hbox to 1in{\hrulefill}

\medbreak\noindent
{\footnotesize
Bump: Department of Mathematics, Stanford University, Stanford,
CA 94305-2125 USA. Email: \texttt{bump@math.stanford.edu}

\medbreak\noindent
McNamara: School of Mathematics and Statistics, University of Sydney, NSW,
Australia. Email: \texttt{mcnamara@maths.usyd.edu.au}

\medbreak\noindent
Nakasuji: Department of Information and Communication Sciences,
Faculty of Science and Technology, Sophia University, 7-1 Kioi-cho,
Chiyoda-ku, Tokyo 102-8554, Japan. Email: \texttt{nakasuji@sophia.ac.jp}
}

\end{document}